\documentclass[12pt]
{amsart}
\usepackage{amsmath,amsthm,amsfonts,amscd,eucal}
\usepackage{mathrsfs}
\usepackage{graphicx} 
\usepackage{epsfig}
\usepackage{typearea}			
\areaset{16cm}{23cm}			

\numberwithin{equation}{section}
\newtheorem{Thm}{Theorem}[section]
\newtheorem{Cor}[Thm]{Corollary}
\newtheorem{Prop}[Thm]{Proposition}
\newtheorem{Lemma}[Thm]{Lemma}

\theoremstyle{definition}
\newtheorem{Dfn}[Thm]{Definition}

\theoremstyle{remark}
\newtheorem{rem}[Thm]{Remark}

\makeatletter
\newenvironment{proofof}[1]{\par
  \pushQED{\qed}%
  \normalfont \topsep6\p@\@plus6\p@\relax
  \trivlist
  \item[\hskip\labelsep
        \bfseries
    Proof of #1\@addpunct{.}]\ignorespaces
}{%
  \popQED\endtrivlist\@endpefalse
}
\makeatother

%

\def\ca{{\mathcal A}}
\def\cb{{\mathcal B}}
\def\cc{{\mathcal C}}
\def\cd{{\mathcal D}}
\def\ce{{\mathcal E}}

\def\ch{{\mathcal H}}

\def\ck{{\mathcal K}}
\def\cl{{\mathcal L}}

\def\cp{{\mathcal P}}

\def\ct{{\mathcal T}}

\def\cx{{\mathcal X}}

\def\cz{{\mathcal Z}}

\def\bc{{\mathbb C}}

\def\bn{{\mathbb N}}
\def\br{{\mathbb R}}

\def\a{\alpha}

\def\g{\gamma}        
\def\d{\delta}        
\def\eps{\varepsilon}

\def\th{\vartheta}

\def\l{\lambda}       
\def\m{\mu}


\def\r{\rho}
\def\s{\sigma}        
     
\def\t{\tau}

\def\f{\varphi}

        \def\O{\Omega}



%
%
%


\def\wh{\widehat}

\DeclareMathOperator{\Tr}{Tr}
\DeclareMathOperator{\tr}{Tr}

\DeclareMathOperator{\re}{Re}
\DeclareMathOperator{\Res}{Res}
\newcommand{\norm}[1]{\| #1 \|}

\newcommand{\dg}{d_{\text{geo}}}
\newcommand{\Less}{L_{\text{ess}}}
\newcommand{\dess}{d_{\text{ess}}}


\begin{document}

\title{Spectral triples for nested fractals}
\author{Daniele Guido, Tommaso Isola}
\address{Dipartimento di Matematica, Universit\`a di Roma ``Tor
Vergata'', I--00133 Roma, Italy.}
\email{guido@mat.uniroma2.it, isola@mat.uniroma2.it}
\thanks{The  authors were partially 
supported by GNAMPA, MIUR,  GDRE GREFI GENCO, and the ERC Advanced Grant 227458 OACFT }

\begin{abstract}
It is shown that, for nested fractals \cite{Lind}, the main structural data, such as the Hausdorff dimension and measure, the geodesic distance (when it exists) induced by the immersion in $\br^n$, and the self-similar energy can all be recovered by the description of the fractals in terms of the spectral triples considered in \cite{GuIs10}. 
\end{abstract}
  
 \subjclass[2000]{Primary 58B34; Secondary 28A80, 58J42}
\keywords{spectral triple, nested fractal, self-similar energy, Hausdorff dimension, noncommutative distance}

\date{Jan 29, 2016}

\maketitle

 \setcounter{section}{-1}

\section{Introduction}

In this note, we analyze the class of nested fractals \cite{Lind,Sabot,Pe00,Pe04} by
making use of the spectral triples introduced in \cite{ConnesBook} for the case of the Cantor set, and in \cite{GuIs10} for a wide class of fractals, and prove that such noncommutative geometric description reproduces the classical notions of Hausdorff dimension and measure, the self-similar Dirichlet form, and also, when the fractal is finitely arcwise connected \cite{K}, the corresponding geodesic distance.

Starting with the first examples given by Connes in \cite{ConnesBook}  and the early papers of Lapidus \cite{La94,La97}, many papers are now available concerning the noncommutative approach to fractals \cite{Buyalo,GuIs8,GuIs9,GuIs10,ChIv07,PeBe,CIL,CIS,FaSa,CGIS02,LaSa}. It turns out that noncommutative geometry
can be fruitfully applied to smooth as well as singular spaces, since it  gives a universal procedure which associates with a spectral triple a metric dimension, an integration, a distance and an energy.
In all of  the above mentioned papers notions of noncommutative dimension and/or measure were studied, while few of them \cite{GuIs9,PeBe,CIL,CIS,LaSa} could fully recover a natural distance on some class of fractals. A noncommutative construction of the Dirichlet energy for fractals starting from geometric data was only considered in \cite{CGIS02} for the case of the Sierpinski gasket. 
The idea of constructing a spectral triple on a fractal as a countable direct sum of finite dimensional spectral triples has been  extended from fractals to general compact metric spaces in \cite{Palm}, where  the Hausdorff dimension and measure, and the metric were recovered from their noncommutative analogues. More recently, in \cite{HKT} a different approach to constructing spectral triples on metric spaces was taken, based on \cite{CiSa1,CiSa2}, so the starting point is a regular symmetric Dirichlet form on a locally compact separable metric space, endowed with a nonnegative Radon measure, and the intrinsic (or Carnot-Carath\'eodory) metric is recovered.


The basic requirements for a spectral triple $\ct=(\ca,\ch,D)$, where $\ca$ is a self-adjoint algebra of operators and $D$ is an unbounded self-adjoint operator, both acting on the Hilbert space $\ch$, are the boundedness of the commutators $[D,a]$, $a\in\ca$, and the compactness of the resolvents of $D$  \cite{ConnesBook}. Based on these hypotheses, one may associate with a spectral triple $\ct=(\ca,\ch,D)$ a notion of dimension, an integral on the elements of the algebra,  an energy form, and a distance on the state space of the algebra, according to the table below, where  we denote by $\mathcal{Z}_a(s)=\tr(a(D^2+1)^{-s/2})$ the zeta function for $a\in\ca$, by $\cz(s)=\cz_I(s)$ the zeta function of the spectral triple,  and $\f,\psi $ are states on the norm closure of $\ca$:
\begin{table}[h]
\begin{center}
\begin{tabular}{|l|l|}
$d_H=\inf\{s>0:\mathcal{Z}(s)<+\infty\}$& (NC dimension)\\
\hline
$\oint a=\displaystyle\Res_{s=d_H} \mathcal{Z}_a(s), a\in\ca$ & (NC integral)\\
\hline
$d_D(\f,\psi)=\displaystyle\sup_{\|[D,a]\|\leq1}|\f(a)-\psi(a)|$ & (NC distance)\\
\hline
$\ce_D[a]=\displaystyle\oint|[D,a]|^2,  a\in\ca$ &  (NC Dirichlet energy)
\end{tabular}
\end{center}
\label{default}
\end{table}%

The formula for the noncommutative energy  is motivated by the fact that, in noncommutative geometry,  $[D,a]$ is a replacement for the gradient of $a$. Similar expressions were used in some previous papers \cite{PeBe, JuSa,CGIS02}.

As the references above show, there are many possible spectral triples that can be associated to a fractal, and more generally to a singular space. Our aim here is to show that with a very simple triple, based only on a few data from the self-similar fractal, many features which are usually produced with clever analytic tools can be recovered, for nested fractals, via general noncommutative methods, based on completely different ideas, such as those of noncommutative residues and singular traces.

The  spectral triple used here was introduced in \cite{GuIs10} and consists of the algebra $\ca$ of suitably regular functions on the fractal acting on the Hilbert space  $\ch$ given by the $\ell^2$ space on the oriented edges of the fractal, and of the Dirac operator $D$ on $\ch$ which maps an oriented edge to its opposite, multiplied by the inverse length of the edge itself. We call these triples discrete since the Hilbert space is not given by an $L^2$ space w.r.t. some measure on the fractal but the $\ell^2$ space of its edges, see Definition \ref{DiscreteTriples} for further details.

In Section 2 we analyze the noncommutative metric and dimension associated with our triple. This was already done in \cite{GuIs10} for a much larger family of fractals, the results for the self-similar case being stated in Remark 2.11 $(ii)$ without  proof.
We prove here that, for nested fractals, the noncommutative dimension coincides with the Hausdorff dimension and the noncommutative integral coincides (up to a multiplicative constant) with the integral w.r.t.~the Hausdorff measure.

In Section 3 we study the noncommutative Dirichlet energy. In \cite{CGIS02}  the case of the Sierpinski gasket was considered, with a spectral triple given by a deformation of that considered in \cite{CIL,CIS}. The noncommutative Dirichlet energy was introduced there, in terms of the residue of a zeta function,   and it was proved that it coincides with the unique self-similar energy on the gasket up to a constant. However, in order to obtain a non trivial energy, the abscissa of convergence of the zeta function has to take a specific value $\d$, which we called energy dimension, and is different from the Hausdorff dimension $d_H$.
Here we show that this result holds for all nested fractals endowed with discrete spectral triples, and prove a general formula for  the energy dimension, namely $\d=2-\frac{\log \rho}{\log \lambda}$, 
where $\rho$ is the scaling factor for the energy, and $\lambda$ is the (unique) scaling factor for the contractions.

The formula for the self-similar energy proved here for nested fractals is based on a clever result of Peirone \cite{Pe04}, where he shows that any quadratic functional on the space of functions on the essential fixed points gives rise to a self-similar Dirichlet form on the whole fractal, and that the set of energies obtained in this way coincides with those obtained from eigenforms. This result, together with the simple form of our triples, allows a quite short  proof of the formula for the self-similar energy. However,
even though the noncommutative energy formula was envisaged as a generalisation of that for the noncommutative integral, in the case of fractals the reason why it holds is technically different from the reason why the noncommutative integral formula holds. The latter is based on the summability of continuous functions w.r.t. the Hausdorff measure, while square commutators $|[D,f]|^2$ are not summable in general w.r.t. the noncommutative Hausdorff trace for a finite energy function $f$. Indeed, in order to obtain a finite residue, one should change the dimension, passing from the Hausdorff dimension to the energy dimension.

Moreover, in the case of (possibly noncommutative) smooth manifolds, the square of the Dirac operator is (related to) the Laplace operator, and the Hilbert space is the $L^2$-space of a (finite dimensional) fiber bundle, namely is a finitely generated projective module. This implies that  the square commutator $|[D,f]|^2$ is essentially bounded, when $f$ is Lipschitz, and its noncommutative Hausdorff trace (w.r.t. the standard dimension) gives the classical Dirichlet energy. In the case of fractals instead the intersection between Lipschitz functions and finite energy functions may be quite small, and for our triples the Hilbert space is not finitely generated projective. Therefore the summability of the function $|D|^{-s/2}|[D,f]|^2|D|^{-s/2}$ for a finite energy $f$ and sufficiently large $s$ holds for reasons completely different from those responsible of the summability of   $|D|^{-s/2}f|D|^{-s/2}$ for a Lipschitz $f$, and the abscissae of convergence are different.

We remark here that our triple is uniquely associated with the fractal as a self-similar metric space, hence it produces a unique energy for any metric fractal. Then one may ask what happens for the Vicsek square, which admits infinitely many. It turns out that any of such energies can be produced, with our method, by suitably deforming the fractal. In other words, we prove, in Section 3.2, that  any of the self-similar energies on the Vicsek square comes, via our technique, from a different immersion in $\br^2$, related to the original square by an affine transformation; the energy is driven by the geometry.

In Section 4 we prove that the geodesic distance induced by the natural immersion of the nested fractal in $\br^n$ (when finite) can be completely recovered by means of the discrete spectral triple. Let us remark that, since the algebra of functions on the fractal is abelian, we may consider as states the delta-functions, thus obtaining a distance between points of the fractal. However the formula for the noncommutative distance should be modified, by replacing the norm $\|[D,a]\|$ with the essential norm. We remark that such formula for the distance involving the essential norm is, to our knowledge, completely new, and seems to be related to the approximation of the fractal via finite graphs.

In some cases, such as the Sierpinski gasket, the replacement of the norm $\|[D,a]\|$ with the essential norm is not necessary, as  stated without proof in \cite{GuIs10}. An analogous result for the gasket, but with a completely different spectral triple, was proved in \cite{CIL,CIS}. 

The results in this paper have been announced in the conference ``Noncommutative Analysis, Operator Theory, and Applications", held in Milan on June 2014 \cite{GuIsProc}, and in the Special Session ``Fractals" of the  10$^{th}$ AIMS Conference on Dynamical Systems,
Differential Equations and Applications, held in Madrid on July 2014.

\section{Preliminaries}
\subsection{Nested fractals}
Let $\O:= \{w_i : i=1,\ldots,k \}$ be a family of contracting similarities of $\br^{N}$, $i.e.$ there are $\l_i \in(0,1)$ such that $\| w_i(x) - w_i(y) \| = \l_i \|x-y\|$, $x,y\in \br^{N}$.  	 The unique non-empty compact subset $K$ of $\br^{N}$ such that $K = \bigcup_{i=1}^{k} w_i(K)$  is called the {\it self-similar fractal} defined by $\{w_i  \}_{i=1,\ldots,k}$. For any $i\in\{1,\ldots,k\}$, let $p_i\in\br^N$ be the unique fixed-point of $w_i$, and say that $p_i$ is an essential fixed-point of  $\O$ if there are $i',j,j'\in\{1,\ldots,k\}$ such that $i'\neq i$, and $w_j(p_i)=w_{j'}(p_{i'})$. Denote by $V_0$ the set of essential fixed-points of $\O$, and we assume that it has at least two elements, and let $E_0:=\{ (p,q) : p,q\in V_0, p\neq q\}$. Observe that $(V_0,E_0)$ is a directed finite graph \cite{BoMu} whose edges are in 1:1 correspondence with ordered pairs of distinct vertices.
For any $n\in\bn$, set $\Sigma_n := \{ \s: \{1,\ldots,n\} \to \{1,\ldots,k\} \}$, $w_\s:=w_{\s(1)}\circ\cdots\circ w_{\s(n)}$, $\forall \s\in\Sigma_n$, $V_n := \cup_{\s\in\Sigma_n} w_\s(V_0)$, and $w_\emptyset:=id$, $\Sigma_0:=\{\emptyset\}$, $\Sigma :=\cup_{n=0}^\infty \Sigma_n$. Then, $V_{n-1}\subset V_{n}$, $\forall n\in\bn$. Sets of the form $w_\s(V_0)$, for $\s\in\Sigma_n$, are called \textit{combinatorial $n$-cells}, while those of the form $w_\s(K)$ are called \textit{$n$-cells}. For any $n\in\bn$, define $E_n :=\{ (w_\s(p),w_\s(q)) : \s\in\Sigma_n, p,q\in V_0, p\neq q\}$, and, 
for any $\s\in\Sigma_n$, $i\in\{1,\ldots,k\}$, denote by $\s\cdot i\in\Sigma_{n+1}$ the map defined by $\s\cdot i(j)=\s(j)$, $j\in\{1,\ldots,n\}$, $\s\cdot i(n+1)=i$.

\begin{Dfn} \label{def:nested}
The couple $(K,\O)$ is said to be a \textit{nested fractal} in the sense of Lindstr\o m  \cite{Lind,Barl} if
\begin{itemize}
\item[$(1)$] $\l_i=\l$, for all $i\in\{1,\ldots,k\}$,

\item[$(2)$] there is an open bounded set $U\subset\br^N$, such that $\cup_{i=1}^k w_i(U)\subset U$, and $w_i(U)\cap w_j(U)=\emptyset$, for all $i,j\in\{1,\ldots,k\}$, $i\neq j$ (open set condition),

\item[$(3)$] the graph $(V_1,E_1)$ is connected, that is, for any $p,q\in V_1$, there are $p_0,\ldots,p_s\in V_1$, such that $p_0=p$, $p_s=q$, and $(p_{i-1},p_{i})\in E_1$, for all $i=1,\ldots,s$,

\item[$(4)$] if $\s,\s'\in\Sigma_n$, $\s\neq \s'$, then $w_\s(V_0)\neq w_{\s'}(V_0)$, and $w_\s(K)\cap w_{\s'}(K) = w_\s(V_0)\cap w_{\s'}(V_0)$ (nesting property),

\item[$(5)$] if $p,q\in V_0$, $p\neq q$, then the symmetry with respect to $\Pi_{pq}:=\{ z\in\br^N : \norm{z-p}=\norm{z-q}\}$ maps combinatorial $n$-cells to combinatorial $n$-cells, for any $n\in\bn\cup\{0\}$, and maps a combinatorial $n$-cell lying on both sides of $\Pi_{pq}$ to itself (symmetry property).
\end{itemize}

\end{Dfn}
 
\begin{rem} 
Any (open) regular polygon of the plane can be taken as the set $U$ in Definition \ref{def:nested} for some nested fractal. In \cite{Barl,Kiga2} it is conjectured that $V_0$ can only be the vertex set of a regular planar polygon, or of an $N$-dimensional tetrahedron, or of the convex envelope of the set $\{e_i,-e_i : i=1,\ldots,N\}$, where $\{e_1,\ldots,e_N\}$ is the canonical basis of $\br^N$.
\end{rem}
 
Set $V:=\cup_{n=0}^\infty V_n$, $E:=\cup_{n=0}^\infty E_n$, and, for any $e=(e^+,e^-)\in E$, and any function $f$, set $\langle \partial f, e \rangle := f(e^+)-f(e^-)$. For any $\s\in \Sigma$, set $|\s|=n$, if $\s\in\Sigma_n$.

The following definitions are taken from \cite{Pe00,Pe04}, with slight modifications.

\begin{Dfn} \label{FS05}
If $X$ is a finite set, let us denote by $C(X)$ the set of functions from $X$ to $\br$, and by $\mathscr{D}$ the set of quadratic functionals $\ce: C(V_0)\to \br$ of the form
$$
\ce[f]:=\sum_{(p,q)\in E_0} c_{pq}(f(p)-f(q))^2 = \sum_{e\in E_0} c_e \, | \langle \partial f, e \rangle |^2,
$$ 
where $c_{pq}=c_{qp}>0$, $(p,q)\in E_0$. For $\ce\in\mathscr{D}$, $n\in\bn$,  set 
$$
	S_n(\ce)[f]  := \sum_{\s\in\Sigma_n} \ce[f\circ w_\s], \qquad \forall f\in C(V_n).
$$
	A functional $\ce\in\mathscr{D}$ is called an eigenform, with eigenvalue $\r>0$, if $\inf \{ S_1(\ce)[g] : g\in C(V_1), g|_{V_0}=f\}=\r\ce[f]$, $\forall f\in C(V_0)$.
\end{Dfn}

Lindstr\o m proved that there is an eigenform $\wh{\ce}\in\mathscr{D}$. Note that all eigenforms have the same eigenvalue $\r$, which satisfies $\r\in(0,1)$, see \cite{Sabot}, Proposition 3.8. It is known that $\wh{\ce}_\infty[f]:= \lim_{n\to\infty} \r^{-n}S_n(\wh{\ce})[f]$ defines a Dirichlet form on the fractal $K$. Define $\mathscr{F}:= \{ f\in \cc(K) : \wh{\ce}_\infty[f]<\infty\}$. It turns out that $\mathscr{F}$ does not depend on the eigenform, cf. \cite{Pe04} Section 3.

\begin{Thm} \label{FS06}
	Let $\ce\in\mathscr{D}$, not necessarily an eigenform. Then there exists 
	$$
	\ce_\infty[f]:=\lim_{n\to\infty} \r^{-n}S_n(\ce)[f], \qquad f\in\mathscr{F}.
	$$
	 Moreover, there is an eigenform $\ce'\in\mathscr{D}$ such that $\ce_\infty = \ce'_\infty$.
\end{Thm}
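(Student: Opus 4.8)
The plan is to reduce everything to the eigenform case by means of the ``shorting'' (effective resistance) renormalisation map. For $\cg\in\mathscr D$ set $\Lambda(\cg)[f]:=\inf\{S_1(\cg)[g]:g\in C(V_1),\,g|_{V_0}=f\}$. Using connectedness of $(V_1,E_1)$ (together with the symmetry property for irreducibility) one checks that the effective resistance between any two points of $V_0$ in $(V_1,E_1)$ is finite and positive, so $\Lambda$ maps $\mathscr D$ into $\mathscr D$; moreover $\Lambda$ is order preserving, positively homogeneous of degree one, and, because the effective conductance of a finite network is a concave function of the edge conductances, concave. The nesting property guarantees that distinct cells meet only along images of $V_0$, so minimising $S_n(\cg)$ over $C(V_n)$ with the values on $V_{n-1}$ fixed decouples over the $(n-1)$-cells; this yields, for every $f$,
\begin{equation*}
\inf\{\,S_n(\cg)[g]:g|_{V_{n-1}}=f|_{V_{n-1}}\,\}=S_{n-1}(\Lambda(\cg))[f|_{V_{n-1}}],\qquad\text{whence}\qquad S_n(\cg)[f|_{V_n}]\ \ge\ S_{n-1}(\Lambda(\cg))[f|_{V_{n-1}}].
\end{equation*}
Iterating, the trace of $S_n(\cg)$ on $V_0$ equals $\Lambda^n(\cg)$. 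Since any two functionals in $\mathscr D$ are equivalent as quadratic forms on the finite-dimensional space $C(V_0)$ modulo constants, putting $N:=\r^{-1}\Lambda$ we get $N(\wh\ce)=\wh\ce$ and $a\,\wh\ce\le N^n(\cg)\le b\,\wh\ce$ for all $n$, with $0<a\le b<\infty$ depending only on $\cg$.

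The eigenform case is easy: for an eigenform $\cg$ the displayed inequality reads $S_n(\cg)[f|_{V_n}]\ge\r\,S_{n-1}(\cg)[f|_{V_{n-1}}]$, so $n\mapsto\r^{-n}S_n(\cg)[f|_{V_n}]$ is non-decreasing; since $\cg$ is equivalent to $\wh\ce$ this sequence is bounded precisely when $f\in\mathscr F$ (so $\mathscr F$ does not depend on the eigenform), and then its limit is $\cg_\infty[f]$. I would write $\a_n(\cg):=\r^{-n}S_n(\cg)[f|_{V_n}]$ and, for $f\in\mathscr F$, $\d_n(\cg):=\a_n(\cg)-\a_{n-1}(N\cg)\ge0$; the $\cg$-orthogonality of harmonic functions identifies $\d_n(\cg)$ with the ``scale-$n$ energy'' $\r^{-n}S_n(\cg)[f|_{V_n}-h]$, $h$ the $\cg$-harmonic extension of $f|_{V_{n-1}}$, and straightforward iteration gives $\a_n(\ce)=\a_0(N^n\ce)+\sum_{i=1}^{n}\d_i(N^{n-i}\ce)$ with $\a_0(N^n\ce)=(N^n\ce)[f|_{V_0}]$, as well as the telescoping identities $\sum_{i=n-K+1}^{n}\d_i(N^{n-i}\ce)=\a_n(\ce)-\a_{n-K}(N^K\ce)$; for an eigenform $\cg$ the latter collapses to $\sum_{i\ge1}\d_i(\cg)=\cg_\infty[f]-\cg[f|_{V_0}]$.

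The first substantial step is to prove that the orbit $(N^n(\ce))_n$ converges to an eigenform $\ce'$ (necessarily $\ce'\ge a\,\wh\ce$, so $\ce'\in\mathscr D$). The orbit is trapped between $a\wh\ce$ and $b\wh\ce$, and $N$ is order preserving, positively homogeneous and concave on the cone $\mathscr D$; a concavity/monotonicity argument in the spirit of nonlinear Perron--Frobenius theory then forces convergence, in fact geometrically fast. For general nested fractals one cannot shortcut this via uniqueness of the eigenform — e.g. the Vicsek square carries a whole segment of eigenforms — and this is precisely the point where Peirone's result \cite{Pe04,Pe00} is needed.

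The second step is to deduce convergence of $\a_n(\ce)$ from that of $N^n(\ce)$, and this I expect to be the main obstacle. The lower bound is painless: iterating the trace inequality gives $\a_n(\ce)\ge\a_{n-k}(N^k\ce)\ge a_k\,\a_{n-k}(\ce')$ with $a_k\uparrow1$ (from Step~1 and monotonicity of $N$), hence $\liminf_n\a_n(\ce)\ge\ce'_\infty[f]$. The reverse inequality $\limsup_n\a_n(\ce)\le\ce'_\infty[f]$ is the crux: in $\a_n(\ce)=\a_0(N^n\ce)+\sum_{i=1}^{n}\d_i(N^{n-i}\ce)$ one must show $\sum_{i=1}^{n}\d_i(N^{n-i}\ce)\to\sum_{i\ge1}\d_i(\ce')=\ce'_\infty[f]-\ce'[f|_{V_0}]$, but the finest-scale terms (those of small renormalisation depth $n-i$) are \emph{not} dominated by $\d_i(\wh\ce)$ with a scale-independent constant, so no naive dominated-convergence argument suffices. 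The idea is to split, for each fixed $K$, the deep-scale part (depth $\ge K$), which is controlled using $a_K\ce'\le N^{k}\ce\le b_K\ce'$ for $k\ge K$ with $\sum_k(b_k-a_k)<\infty$ by the geometric convergence of Step~1, from the top $K$ scales, which should be handled through the telescoping identity and the eigenform monotonicity of the previous paragraph; making this last estimate uniform in $n$ is exactly the technical heart, and is in essence \cite[\S3]{Pe04} transcribed into the present notation (it is there that the symmetry of the fractal and the explicit structure of $\mathscr D$ are used). Granting it, $\a_n(\ce)\to\ce'_\infty[f]$ for every $f\in\mathscr F$, so $\ce_\infty$ is well defined and $\ce_\infty=\ce'_\infty$ with $\ce'$ the eigenform produced in Step~1.
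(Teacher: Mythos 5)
The paper does not actually prove this theorem: its ``proof'' is the single line ``See [Pe04], Theorem 4.11, and Remark 4.1''. Measured against that, your proposal is more than the paper offers, and its skeleton is the right one --- it is essentially a reconstruction of Peirone's argument: the trace/renormalisation map $\Lambda$, the decoupling identity $\inf\{S_n(\cg)[g]:g|_{V_{n-1}}=f|_{V_{n-1}}\}=S_{n-1}(\Lambda\cg)[f|_{V_{n-1}}]$ (correct, by the nesting property), the resulting monotonicity $\a_n(\cg)\ge\a_{n-1}(N\cg)$, the Pythagorean identification of $\d_n$, and the comparability of all elements of $\mathscr D$ on $C(V_0)/\text{constants}$, which settles the eigenform case and the independence of $\mathscr F$ from the eigenform. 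All of that algebra checks out.

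However, as a self-contained proof the proposal has two genuine gaps, both of which you flag yourself but neither of which you close. First, the convergence of the orbit $N^n(\ce)$ to an eigenform is \emph{not} a soft consequence of monotonicity, homogeneity and concavity of $N$: for general monotone homogeneous maps on a cone, orbits need not converge, and since on the Vicsek square the fixed-point set of $N$ is a whole segment, $N$ cannot be a contraction in Hilbert's projective metric, so the standard nonlinear Perron--Frobenius machinery does not apply directly; moreover the \emph{geometric} rate you assert (and then use, via $\sum_k(b_k-a_k)<\infty$, in Step 2) is not justified by anything you write. This convergence is itself a theorem of Peirone (using the symmetry property (5) of Definition \ref{def:nested}), not a lemma one gets for free. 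Second, the ``reverse inequality'' $\limsup_n\a_n(\ce)\le\ce'_\infty[f]$, i.e.\ the uniform-in-$n$ control of the top-$K$-scale increments $\d_i(N^{n-i}\ce)$ for $n-i<K$, is exactly the technical content of [Pe04, \S 3--4], and you explicitly outsource it. So the proposal correctly locates where the difficulty lies and is acceptable at the same level of rigour as the paper's own citation-proof, but it is a proof sketch with the two hardest steps deferred to the very reference the authors cite, not a proof.
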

\begin{proof}
See \cite{Pe04}, Theorem 4.11, and Remark 4.1.
\end{proof}

Therefore, one can obtain the Dirichlet form on the fractal $K$ just by taking any quadratic form $\ce\in\cd$, and computing $\ce_\infty$. 

\begin{rem}
In the general case of p.c.f. self-similar fractals, the self-similar Dirichlet forms were introduced in \cite{Kiga1,Kusu}, see also \cite{Kiga2} where one can find the previous results, and much more. Some results in the setting of noncommutive geometry are contained in the pioneering papers \cite{KiLa1,KiLa2}.
\end{rem}

\subsection{Spectral triples}

 Let us recall that $(\ca,\ch,D)$ is called a {\it spectral triple} when  $\ca$ is a $^*$-algebra acting on the Hilbert space $\ch$, $D$ is a self  adjoint operator on the same Hilbert space such that $[D,a]$ is  bounded for any $a\in\ca$, and $D$ has compact resolvent.  In the  following we shall assume that $0$ is not an eigenvalue of $D$. In the general case one should replace $|D|^{-d}$ below with e.g. $(I+D^2)^{-d/2}$. Such a triple is called $d^+$-summable, $d\in  (0,\infty)$, when $|D|^{-d}$ belongs to the Macaev ideal  $\cl^{1,\infty}=\{a:\frac{S_{n}(a)}{\log n}<\infty\}$, where $S_n(a):=\sum_{k=1}^n \m_k(a)$ is the sum of the first $n$ largest eigenvalues (counted with multiplicity) of $|a|$.  
 
For $d^+$-summable spectral triples, the zeta function of the spectral triple is defined as $\cz(s)=\tr(|D|^{-s})$, and the (noncommutative) metric dimension $d$ of  $(\ca,\ch,D)$ is defined as the abscissa of convergence of $\cz(s)$. The dimension spectrum is the larger set consisting of the poles of the meromorphic extension of $\cz(s)$.
 The noncommutative version of the integral on functions is given by  the formula $\oint a=\Tr_\omega(a|D|^{-d})$, where, for $T>0$, $\Tr_\omega(T):= \lim_\omega \frac{S_{n}(T)}{\log n}$ is the  Dixmier trace, i.e. a singular trace summing logarithmic divergences.   
 It is well known (cf. \cite{ConnesBook} and \cite{CPS} Thm. 3.8) that the noncommutative integral can be computed as
 $$
 \oint a=\tr_\omega (a|D|^{-d}) = \frac1d{\Res}_{s=d}\ \tr(a|D|^{-s})
 $$
when the limit in the definition of residue exists. 

In analogy with the classical setting, one may define the  $\a$-dimensional Hausdorff functional as the map $a\mapsto\oint a= \Tr_\omega(a |D|^{-\a})$, and it turns out  that such functional on $\ca$ can be non-trivial only if $\a=d(\ca,\ch,D)$  \cite{GuIs9}.
However, when considering the Dirichlet form as in the introduction, things may change. Given a spectral triple, one may define an associated energy functional as 
$$
\ce[a]={\Res}_{s=d}\ \tr(|[D,a]|^2|D|^{-s}).
$$
Indeed, in the case of manifolds, this formula gives back the Dirichlet integral. For fractal spaces instead, it turns out that the correct Hausdorff functional is not related to the metric dimension of the spectral triple, but to another dimension, cf. \cite{CGIS02} and Section 3 below.

We conclude this brief description of some features of spectral triples with the notion of noncommutative distance. Given two states $\f,\psi$ on the C$^*$-algebra generated by $\ca$, their distance is defined as
$$
d_D(\f,\psi)=\sup_{\|[D,a]\|\leq1}|\f(a)-\psi(a)|.
$$
Under suitable circumstances, such distance metrizes the weak$^*$-topology on states (cf. \cite{Rieffel99}).
In the commutative case, one may choose delta-functions as states, thus recovering a distance on points. In Section 4 we propose a modification of this formula in order to obtain the original distance on nested fractals.

\section{Spectral triples on self-similar fractals}\label{triplesonfractals}

A standard way to construct spectral triples on  a self-similar fractal $K$ is the following:

\begin{itemize}
\item Select a subset $S\subset K$ together with a triple $\ct_o=(\pi_o,\ch_o,D_o)$ on $\cc(S)$.
\item Set  $\ct_\emptyset=(\pi_\emptyset,\ch_\emptyset,D_\emptyset)$ on $\cc(K)$, where $\pi_\emptyset(f)=\pi_o(f|_S)$, $\ch_\emptyset =\ch_o$, $D_\emptyset=D_o$.
\item Set  $\ct_\sigma:=(\pi_\sigma,\ch_\emptyset,D_\sigma)$ on $\cc(K)$, with
$\pi_\sigma(f)=\pi_\emptyset(f\circ w_\sigma)$, $D_\sigma=\lambda_\sigma^{-1}D_\emptyset$, $\lambda_\sigma=\prod_{i=1}^{|\sigma|}\l_{\sigma(i)}$.
\item Set $\ct=\bigoplus_\sigma\ct_\sigma$ on $\cc(K)$. The $^*$-algebra $\ca$ is usually taken as $\ca=\{f\in\cc(K):[D,f]$ is bdd\}, if not otherwise stated.
\end{itemize}

\begin{Dfn}\label{DiscreteTriples}[Discrete triple on self-similar fractals]
Assume $K$ to be a self-similar fractal in $\br^n$, and construct a triple $\ct_o=(\pi_o,\ch_o,D_o)$ on $\cc(V_0)$, $D_o=F_o|D_o|$ the polar decomposition of the Dirac operator, as follows:  
$$
\ch_o = \bigoplus_{e\in E_0} \ell^2(e),
\quad \pi_o(f) = \bigoplus_{e\in E_0} f(e^+) 
\quad |D_o| = \bigoplus_{e\in E_0}\ell(e)^{-1}I
$$
where $\ell(e)>0$ denotes the length of an edge and $F_o$ is the self-adjoint unitary sending an oriented edge to the same edge with the opposite orientation.
Then construct the triples $\ct_\sigma$ and $\ct=\bigoplus_\sigma\ct_\sigma$ as above.
\end{Dfn}
%

\begin{rem}
$(1)$ Observe that the spectral triple $\ct_o$ fully decomposes as a direct sum on unoriented edges, where, for any unoriented edge $e$, the Hilbert space is a copy of $\bc^2$, $\pi_o(f)$ acts as the diagonal matrix consisting of the values of $f$ at the end points of $e$, and $D_o =\ell(e)^{-1} \begin{pmatrix}
0 & 1 \\
1 & 0
\end{pmatrix}$.

%
\noindent $(2)$ 
Discrete spectral triples are characterized by the fact that the subspace $S$ on which the spectral triple is based is 
a discrete space, or, the corresponding Hilbert space is finite dimensional. For the Sierpinski gasket, spectral triples for which the subspace $S$ is 
a continuous space have been considered, e.g. in
\cite{CIL,CIS,CGIS02}, where the subset $S$ is homeomorphic to a circle.
\end{rem}

\begin{Thm}\label{thm:volmeas}
Assume $K$ to be a nested fractal in $\br^n$. The zeta function $\cz(s)=\tr(|D|^{-s})$ of $(\ca,\ch,D)$ has a meromorphic extension  given by
\[
\mathcal{Z} (s)=\frac{ \sum_{e\in E_0} \ell(e)^s}{1-k\l^{ s}}\, .
\]
Therefore, the metric dimension $d_D$ of the spectral triple $(\ca,\ch,D)$ is $d_D=d=\frac{\log k}{\log 1/\l}$ and
the dimensional spectrum  of the spectral triple is
\[
\mathcal{S}_{\it dim} = \{ d\left(1+\frac{2\pi i}{\log k}n \right): n\in \mathbb{Z}\} \subset \mathbb{C}\, .
\]
$\mathcal{Z}$ has a simple pole in $d_D$, and the measure associated via Riesz theorem with the functional $f\to\oint f$ coincides with a multiple of the Hausdorff measure $H_d$ (normalized on $K$):
\[
 \oint f=tr_\omega(f|D|^{-d}) = \frac{1}{ \log k} \sum_{e\in E_0} \ell(e)^d \int_K f\, dH_d \qquad f\in C(K).
\]
\end{Thm}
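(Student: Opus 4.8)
The plan is to exploit the direct-sum decomposition $\ct=\bigoplus_{\sigma\in\Sigma}\ct_\sigma$ to evaluate the relevant zeta functions explicitly, and then to move from zeta functions to Dixmier traces via the residue formula $\oint a=\tr_\omega(a|D|^{-d})=\frac1d\Res_{s=d}\tr(a|D|^{-s})$, valid whenever the limit exists.

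First I would compute $\cz(s)$. Since $K$ is nested, $\lambda_\sigma=\lambda^{|\sigma|}$, so on the common Hilbert space $\ch_o$ the operator $|D_\sigma|=\lambda^{-|\sigma|}|D_o|$ has eigenvalues $\lambda^{-|\sigma|}\ell(e)^{-1}$, $e\in E_0$, each of multiplicity one (since $\dim\ell^2(e)=1$). Using $\#\Sigma_n=k^n$, for $\re s>d$ this gives
$$
\cz(s)=\sum_{n=0}^\infty\sum_{\sigma\in\Sigma_n}\sum_{e\in E_0}\lambda^{ns}\ell(e)^s=\Big(\sum_{e\in E_0}\ell(e)^s\Big)\sum_{n=0}^\infty(k\lambda^s)^n=\frac{\sum_{e\in E_0}\ell(e)^s}{1-k\lambda^s},
$$
whose right-hand side is the asserted meromorphic extension. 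The summands being positive for real $s$, Landau's theorem locates the abscissa of convergence at the first real singularity, namely $s=d=\log k/\log(1/\lambda)$ (there $1-k\lambda^s=0$, while $\sum_e\ell(e)^s$ is entire and positive), so $d_D=d$; the simple pole together with convergence for $\re s>d$ also yields $|D|^{-d}\in\cl^{1,\infty}$, i.e.\ $d^+$-summability. The poles of the extension lie among the zeros of $1-k\lambda^s$, i.e.\ among $d(1+2\pi i n/\log k)$, $n\in\bz$; checking that $\sum_e\ell(e)^s$ does not vanish at these points yields $\cs_{\it dim}$ as claimed. Finally the zero of $1-k\lambda^s$ at $d$ is simple, with derivative $-k\lambda^d\log\lambda=\log(1/\lambda)$ (as $\lambda^d=1/k$), so $\cz$ has a simple pole at $d$ with residue $\big(\sum_{e\in E_0}\ell(e)^d\big)/\log(1/\lambda)$.

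Next, for the integral. Because $\pi_\sigma(f)$ acts on the $e$-summand of $\ch_o$ as multiplication by $f(w_\sigma(e^+))$, the same bookkeeping with $\pi(f)$ inserted gives, for $\re s>d$,
$$
\cz_f(s):=\tr(\pi(f)|D|^{-s})=\sum_{e\in E_0}\ell(e)^s\sum_{n=0}^\infty\lambda^{ns}\sum_{\sigma\in\Sigma_n}f\big(w_\sigma(e^+)\big).
$$
Let $\mu$ be the self-similar probability measure on $K$ with weights $1/k$; by the open set condition it equals the normalized $d$-dimensional Hausdorff measure $H_d$ (one uses $H_d(w_i(A))=\lambda^dH_d(A)=k^{-1}H_d(A)$, that distinct $n$-cells meet in $H_d$-null sets, and $0<H_d(K)<\infty$, which is classical). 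The key estimate: iterating $\int_Kf\,d\mu=k^{-n}\sum_{\sigma\in\Sigma_n}\int_Kf\circ w_\sigma\,d\mu$ and using $\mathrm{diam}\,w_\sigma(K)=\lambda^n\,\mathrm{diam}\,K$ with the uniform continuity of $f$ gives $k^{-n}\sum_{\sigma\in\Sigma_n}f(w_\sigma(p))=\int_Kf\,d\mu+\eps_n^{(p)}$ with $|\eps_n^{(p)}|\le\omega_f(\lambda^n\,\mathrm{diam}\,K)\to0$, uniformly in $p\in V_0$. Substituting and splitting off the leading term, $\cz_f(s)=\big(\sum_e\ell(e)^s\big)\big(\int_Kf\,d\mu\big)/(1-k\lambda^s)+\sum_e\ell(e)^s\sum_n(k\lambda^s)^n\eps_n^{(e^+)}$, and by the Abelian theorem for power series ($b_n\to0$ implies $(1-x)\sum_nb_nx^n\to0$ as $x\to1^-$) the remainder is $o\big((1-k\lambda^s)^{-1}\big)=o\big((s-d)^{-1}\big)$ as $s\to d^+$. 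Hence $\lim_{s\to d^+}(s-d)\cz_f(s)$ exists and equals $\frac{\sum_{e\in E_0}\ell(e)^d}{\log(1/\lambda)}\int_Kf\,d\mu$, so the residue formula gives $\oint f=\tr_\omega(\pi(f)|D|^{-d})=\frac1d\cdot\frac{\sum_{e\in E_0}\ell(e)^d}{\log(1/\lambda)}\int_Kf\,d\mu=\frac{\sum_{e\in E_0}\ell(e)^d}{\log k}\int_Kf\,d\mu$, using $d\log(1/\lambda)=\log k$.

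To finish, note that $\cz_f$ is linear in $f$, the above limit exists for every $f\in C(K)$, and it is nonnegative when $f\ge0$; hence $f\mapsto\oint f$ is a positive linear functional on $C(K)$, and Riesz's theorem together with the displayed formula identifies its representing measure with $\big(\frac1{\log k}\sum_{e\in E_0}\ell(e)^d\big)H_d$, which is the assertion. I expect the main obstacle to be precisely the step in the previous paragraph: unlike $\cz=\cz_1$, the function $\cz_f$ need not admit a meromorphic continuation, so the one-sided residue must be extracted by hand, and the Abelian argument is what makes that possible; everything else reduces to the explicit geometric series or to classical facts about self-similar sets.
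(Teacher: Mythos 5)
Your proposal is correct. The first half --- the explicit geometric-series evaluation of $\cz(s)=\sum_\sigma\tr(|D_\sigma|^{-s})$, the resulting meromorphic extension, and the reading off of the abscissa of convergence and of the poles of $1-k\lambda^s$ --- is essentially the same computation the paper performs. Where you genuinely diverge is in identifying the volume functional. The paper evaluates $\Res_{s=d}\tr(\chi_{C_\tau}|D|^{-s})$ on characteristic functions of cells $C_\tau=w_\tau(K)$: it argues that the finitely many $\sigma<\tau$ contribute nothing to the residue, sums the exact geometric series over $\sigma\geq\tau$ to obtain a constant times $H_d(C_\tau)=k^{-|\tau|}$, checks that points have zero volume, and only then passes to continuous functions by approximating them with such simple functions. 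You instead compute $\tr(\pi(f)|D|^{-s})$ directly for continuous $f$, using the equidistribution estimate $k^{-n}\sum_{|\sigma|=n}f(w_\sigma(p))\to\int_K f\,d\mu$ (self-similarity of $\mu$ plus uniform continuity of $f$) together with an Abelian theorem for power series to show the error term contributes nothing to the one-sided residue. Your route buys the existence of the residue for every $f\in C(K)$ in a single stroke --- which is precisely the hypothesis needed to apply the $\tr_\omega$-as-residue formula --- and it sidesteps the bookkeeping over which $\sigma$ make $\chi_{C_\tau}\circ w_\sigma$ nonzero and over cell boundaries; the price is that you must import the classical identification of the weight-$(1/k)$ self-similar measure with normalized $H_d$ under the open set condition, a fact the paper also uses, implicitly, when it writes $k^{-|\tau|}=H_d(C_\tau)$. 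Both arguments are sound and yield the same constant $\frac{1}{\log k}\sum_{e\in E_0}\ell(e)^d$.
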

\begin{proof}
The eigenvalues of  $|D_\s|$ are exactly $\{ \frac{ 1 }{ \ell(e) \l^{|\s|} } \}_{e\in E_0}$, each one with multiplicity $1$.

Hence $\tr(|D_\s|^{-s}) =  \l^{s|\s|} \sum_{e\in E_0} \ell(e)^s$ and for $\re s>d$ we have
\begin{align*}
\tr(|D|^{-s})
&=\sum_\s \tr(|D_\s|^{-s})= \sum_{e\in E_0} \ell(e)^s  \sum_\s  \l^{s|\s|} 
 = \sum_{e\in E_0} \ell(e)^s \sum_{n\geq0} \sum_{|\s|=n} \l^{sn} \\
& = \sum_{e\in E_0} \ell(e)^s \sum_{n\geq0} k^{n} \l^{sn}
= \sum_{e\in E_0} \ell(e)^s (1-k\l^s)^{-1}\, .
\end{align*}
Therefore, we have $\mathcal{S}_{\it dim} = \{d\left(1+\frac{2\pi i}{\log k}n \right): n\in \mathbb{Z}\}\subset \mathbb{C}$.
Now we prove that the volume measure is a multiple of the Hausdorff measure $H_d$.
Clearly, the functional $\tr_\omega(f|D|^{-d})$ makes sense also for bounded Borel functions on $K$, and we recall that the logarithmic Dixmier trace may be calculated as a residue (cf. \cite{ConnesBook} and \cite{CPS} Thm. 3.8): $d\tr_\omega(f|D|^{-d})={\Res}_{s=d}\ \tr(f|D|^{-s})$, when the latter exists. Then, for any multi-index $\t$, denoting by $C_\t:= w_\t(K)$ a cell of $K$, we get
\begin{align*}
d\tr_\omega (\chi_{C_\t}|D|^{-d})
&={\Res}_{s=d}\ \tr(\chi_{C_\t}|D|^{-s})
=\lim_{s\to d^+}(s-d)\ \tr(\chi_{C_\t}|D|^{-s})\\
&=\lim_{s\to d^+}(s-d) \sum_\s \tr(\chi_{C_\t}\circ w_\s|D_\s|^{-s}),
\end{align*}
and we note that $\chi_{C_\t}\circ w_\s$ is not zero either when $\s<\t$ or when $\s\geq \t$. In the latter case, $\chi_{C_\t}\circ w_\s=1$.
Observe that $\tr(\chi_{C_\t}\circ w_\s |D_\s|^{-s})\leq \tr(|D_\s|^{-s}) =  \l^{s|\s|} \sum_{e\in E_0} \ell(e)^s \to  \l^{d|\s|} \sum_{e\in E_0} \ell(e)^d$ when $s\to d^+$, hence
 $\displaystyle\lim_{s\to d^+}(s-d)\tr(\chi_{C_\t}\circ w_\s  |D_\s|^{-s})=0$. Therefore we may forget about the finitely many $\s<\t$, and get
\begin{align*}
d\tr_\omega (\chi_{C_\t}|D|^{-d})
&=\lim_{s\to d^+} (s-d)\sum_{\s\geq\t} \tr(|D_\s|^{-s})
=\lim_{s\to d^+} (s-d) \sum_{n=0}^\infty k^n \l^{s(|\t|+n)} \sum_{e\in E_0} \ell(e)^s  \\
&=  \l^{d|\t|} \sum_{e\in E_0} \ell(e)^d  \lim_{s\to d^+} \frac{s-d}{1-k \l^{s }}
= \frac{1}{k^{|\t|} \log1/\l} \sum_{e\in E_0} \ell(e)^d\\
&= \frac{1}{ \log1/\l} \sum_{e\in E_0} \ell(e)^d H_d(C_\t)\, .
\end{align*}
This implies that, for any $f\in\cc(K)$ for which $f\leq\chi_{C_\t}$, $\oint f\leq \frac{1}{ \log k} \sum_{e\in E_0} \ell(e)^d \left(\frac1k\right)^{|\t|}$, therefore points have zero volume, and $\oint \chi_{\dot{C}_\t}=\oint\chi_{C_\t}$, where $\dot{C}_\t$ denotes the interior of $C_\t$.
As a consequence, for the simple functions given by finite linear combinations of characteristic  functions of cells or vertices, $\oint\f = \frac{1}{ \log k} \sum_{e\in E_0} \ell(e)^d \int\f\,d H_d$.
Since continuous functions are Riemann integrable w.r.t. such simple functions, the thesis follows.
\end{proof}

\begin{rem} \label{notnested}
The above proof holds in greater generality than stated, since it doesn't use the symmetry property of the nested fractal $K$.
\end{rem}
 
\section{A noncommutative formula for the Dirichlet energy}

As explained in the introduction, we propose the following expression for  the energy form on a spectral triple :
$$
\Res_{s=\delta}\tr (|D|^{-s/2}|[D,f]|^2\, |D|^{-s/2}).
$$
However, while for smooth manifolds $\d$ coincides with the dimension, for singular structures such as fractals the metric dimension $d$ is in general different from the energy dimension $\delta$ \cite{CGIS02}. Here we characterize the energy dimension for nested fractals in terms of the scaling parameters for the distance and the energy, recovering in particular the same value as in \cite{CGIS02} for the Sierpinski gasket. We also note that for fractals, elements $a$ with finite energy are not necessarily Lipschitz, namely $[D,a]$ is unbounded in general, however $[D,a]|D|^{-s/2}$ is Hilbert-Schmidt, for  $s>\d$. A feature of the discrete spectral triples we are using is that the operators $|D|$ and $|[D,f]|$ are diagonal w.r.t. the basis of the Hilbert space made of oriented edges, in particular they commute, so we may replace  $|D|^{-s/2}|[D,f]|^2\, |D|^{-s/2}$ with $|[D,f]|^2\, |D|^{-s}$, and some computations are greatly simplified.
As in  \cite{CGIS02}, the residue formula seems to be more efficient than  the Dixmier trace formula, therefore we only discuss the former. 
%

\subsection{The residue formula}
\begin{Thm}\label{thm:energy}
Let $K$ be a nested fractal with scaling parameter $\lambda$, eigenvalue $\rho$ for the eigenform, and $\mathscr{F}$ the set of finite energy functions, endowed with the discrete spectral triple described above. Then, for any non-constant $f\in\mathscr{F}$, the abscissa of convergence of $Z_{|[D,f]|^2}(s)=\tr (|[D,f]|^2\, |D|^{-s})$ is equal to $\delta=2-\frac{\log \rho}{\log \lambda}$, and the residue produces a self-similar Dirichlet form:
$$\ce_D[f]= \Res_{s=\delta}Z_{|[D,f]|^2}(s)=(\log1/\lambda)^{-1}\ce_\infty[f],$$ 
where $\ce$ on $C(V_0)$ is given by $
\ce[f]=\sum_{e\in E_0}\ell(e)^{\d-2} |\langle \partial f,e\rangle|^2$.
\end{Thm}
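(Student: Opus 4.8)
The plan is to compute the zeta function $Z_{|[D,f]|^2}(s)$ explicitly by exploiting the direct sum decomposition $\ct=\bigoplus_\sigma\ct_\sigma$ and the self-similar structure, in close analogy with the proof of Theorem \ref{thm:volmeas}. On the edge $e\in E_0$, sitting in the $\sigma$-component, the operator $|[D,f]|$ acts diagonally with eigenvalue $\ell(e)^{-1}\lambda^{-|\sigma|}|\langle\partial(f\circ w_\sigma),e\rangle|$, while $|D_\sigma|$ has eigenvalue $\ell(e)^{-1}\lambda^{-|\sigma|}$. Hence
\[
Z_{|[D,f]|^2}(s)=\sum_\sigma \lambda^{(s-2)|\sigma|}\sum_{e\in E_0}\ell(e)^{s-2}\,|\langle\partial(f\circ w_\sigma),e\rangle|^2
=\sum_{n=0}^\infty \lambda^{(s-2)n}\,S_n(\ce)[f],
\]
where $\ce[f]=\sum_{e\in E_0}\ell(e)^{s-2}|\langle\partial f,e\rangle|^2$ (note that for $s=\delta$ this is exactly the claimed boundary form, with $\ell(e)^{\delta-2}$ as coefficients, and it belongs to $\mathscr{D}$). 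The key structural input is then that $S_n(\ce)[f]$ behaves, to leading order, like $\rho^n \ce_\infty[f]$ as $n\to\infty$: more precisely, by Theorem \ref{FS06} (Peirone's result) the quadratic form $\ce\in\mathscr{D}$ satisfies $\rho^{-n}S_n(\ce)[f]\to\ce_\infty[f]$. I would combine this with the substitution $s=\delta$, which is precisely the value making $\lambda^{s-2}=\rho$, so that the series becomes $\sum_n \rho^{-n}S_n(\ce)[f]\cdot$(a geometric-type factor) — more carefully, writing $\lambda^{(s-2)n}S_n(\ce)[f]=(\lambda^{s-2}/\rho)^n\cdot\rho^{-n}S_n(\ce)[f]$, one sees the series converges iff $\lambda^{s-2}<\rho$, i.e. iff $s>\delta$, which pins down the abscissa of convergence (using $\ce_\infty[f]\neq0$ for non-constant $f$, which holds because $\ce_\infty$ is a Dirichlet form whose kernel consists only of constants — this needs the connectivity hypothesis (3)).

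For the residue computation, I would write $s=\delta+\varepsilon$ and analyze $(s-\delta)Z_{|[D,f]|^2}(s)=\varepsilon\sum_n t^n a_n$ where $t=\lambda^{\varepsilon}\in(0,1)$ for $\varepsilon>0$, $t\to1^-$ as $\varepsilon\to0^+$, and $a_n=\rho^{-n}S_n(\ce)[f]\to\ce_\infty[f]$. This is an Abelian/Tauberian argument: since $a_n\to L:=\ce_\infty[f]$ and $\varepsilon\sim (1-t)/\log(1/\lambda)$ as $\varepsilon\to0^+$, we get
\[
\lim_{\varepsilon\to0^+}\varepsilon\sum_{n=0}^\infty t^n a_n
=\frac{1}{\log(1/\lambda)}\lim_{t\to1^-}(1-t)\sum_{n=0}^\infty t^n a_n
=\frac{L}{\log(1/\lambda)},
\]
the last equality being the standard Abel summation fact that $(1-t)\sum t^n a_n\to L$ when $a_n\to L$. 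This yields $\ce_D[f]=(\log 1/\lambda)^{-1}\ce_\infty[f]$. One should double-check that the residue genuinely exists (not merely a one-sided limit), but since $Z_{|[D,f]|^2}(s)$ is, by the above, a sum of a geometric-type series it extends meromorphically with a simple pole at $\delta$ (plus the translates $\delta+2\pi i n/\log(1/\lambda)$), so the residue is well-defined.

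The self-similarity of the resulting form $\ce_\infty$ — i.e. that it is a self-similar Dirichlet form — is then immediate from Theorem \ref{FS06} and the discussion following it: $\ce\in\mathscr{D}$, and $\ce_\infty$ equals $\ce'_\infty$ for some eigenform $\ce'$, hence is a self-similar Dirichlet form on $K$. The main obstacle I anticipate is not the algebra but the interchange of limits in the Abelian argument: one must justify that the tail of the series is uniformly controlled, which requires an a priori bound of the form $\rho^{-n}S_n(\ce)[f]\le C$ for all $n$ (monotonicity or near-monotonicity of $\rho^{-n}S_n$, which again comes from Peirone's analysis in \cite{Pe04}), and one must handle the fact that for $s<\delta$ the convergence of the raw trace series genuinely fails so the meromorphic continuation has to be produced by resummation rather than by the defining series. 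A secondary point to be careful about is that $|[D,f]|$ is unbounded for general $f\in\mathscr{F}$, so $[D,f]$ is not an element of the algebra; one works instead with the closed operator $[D,f]$ and checks $[D,f]|D|^{-s/2}$ is Hilbert–Schmidt for $s>\delta$, which is exactly the content of the convergence of $Z_{|[D,f]|^2}(s)$, so the trace manipulations are legitimate.
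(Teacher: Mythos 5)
Your proposal follows essentially the same route as the paper's proof: diagonalize $|[D,f]|^2|D|^{-s}$ over the oriented edges, rewrite the trace as $\sum_n(\lambda^{s-2}\rho)^n\,\rho^{-n}S_n(\ce_s)[f]$ with $\ce_s\in\mathscr{D}$ having coefficients $\ell(e)^{s-2}$, invoke Peirone's Theorem \ref{FS06} for the convergence and boundedness of $\rho^{-n}S_n$, and finish with an Abelian limit at $s=\delta+\eps$; the double-limit issue you correctly identify as the main obstacle (the coefficients $\ell(e)^{s-2}$ make the sequence $a_n$ depend on $\eps$) is precisely what the paper's two-variable Lemma \ref{Res=lim} handles, via the uniform bounds $\ell_{min}^\eps S_n(\ce_\delta)[f]\le S_n(\ce_{\delta+\eps})[f]\le \ell_{max}^\eps S_n(\ce_\delta)[f]$. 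Two minor points: the factorization should read $\lambda^{(s-2)n}S_n(\ce)[f]=(\lambda^{s-2}\rho)^n\cdot\rho^{-n}S_n(\ce)[f]$, so convergence holds iff $\lambda^{s-2}\rho<1$ (not $\lambda^{s-2}<\rho$), which still yields $s>\delta$; and the paper only establishes the one-sided limit $\lim_{\eps\to0^+}\eps Z_{|[D,f]|^2}(\delta+\eps)$ rather than a genuine meromorphic continuation, so your closing claim of meromorphy is both unproved and unnecessary.
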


\begin{Lemma}\label{Res=lim}
Assume we have  $\lambda\in(0,1)$, and  $a:\bn\times[0,\infty)\to\br$ continuous in the second variable, such that $\displaystyle\lim_{n\to\infty,\eps\to0}a(n,\eps)=a_0$. Then
$$
\lim_{\eps\to0^+}\eps
\sum_{n\in\bn} \lambda^{n\eps}a(n,\eps)=\frac{a_0}{\log(1/\lambda)}.
$$
\end{Lemma}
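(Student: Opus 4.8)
The plan is to compare the sum $\eps\sum_{n\in\bn}\lambda^{n\eps}a(n,\eps)$ with the constant-coefficient sum $\eps\sum_{n\in\bn}\lambda^{n\eps}a_0$, which is a geometric series one can evaluate exactly. Indeed $\eps\sum_{n\geq 1}\lambda^{n\eps}=\eps\,\frac{\lambda^\eps}{1-\lambda^\eps}$, and since $1-\lambda^\eps=-\eps\log\lambda+O(\eps^2)$ as $\eps\to0^+$, this tends to $\frac{1}{\log(1/\lambda)}$. So it suffices to show that the difference $\eps\sum_{n\geq1}\lambda^{n\eps}\bigl(a(n,\eps)-a_0\bigr)$ tends to $0$ as $\eps\to0^+$.

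To control the difference, I would use the hypothesis $\lim_{n\to\infty,\eps\to0}a(n,\eps)=a_0$ in the following form: for every $\eta>0$ there are $N$ and $\eps_0>0$ such that $|a(n,\eps)-a_0|<\eta$ whenever $n\geq N$ and $\eps<\eps_0$. Split the sum at $N$. The tail $n\geq N$ contributes at most $\eta\cdot\eps\sum_{n\geq1}\lambda^{n\eps}$, which is bounded by roughly $\eta/\log(1/\lambda)$ uniformly for small $\eps$, hence can be made arbitrarily small. For the finite head $1\le n< N$, continuity of $a(n,\cdot)$ gives that each term $a(n,\eps)-a_0$ is bounded as $\eps\to0^+$ (it converges, in fact, but boundedness is all that is needed), so $\eps\sum_{n=1}^{N-1}\lambda^{n\eps}\bigl(a(n,\eps)-a_0\bigr)$ is a finite sum of terms each of which is $\eps$ times a bounded quantity, hence $\to0$ as $\eps\to0^+$. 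Combining the two estimates and letting $\eta\to0$ yields the claim. (One should be slightly careful about the precise meaning of the joint limit hypothesis; the safe reading, and the one that makes the statement true, is the one just used — given $\eta$, one $N$ and one $\eps_0$ work simultaneously for all large $n$ and small $\eps$.)

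The main obstacle, such as it is, is purely one of bookkeeping: making sure the "head" and "tail" cutoff $N$ and the bound $\eps_0$ are chosen in the right order, and that the geometric-series estimate $\eps\sum_{n\geq1}\lambda^{n\eps}\le C$ for a constant $C$ independent of $\eps\in(0,\eps_0)$ is invoked to dominate the tail uniformly. There is no analytic subtlety beyond the elementary asymptotics of $1-\lambda^\eps$; the lemma is essentially an Abelian (Hardy–Littlewood type) statement dressed up for the specific use in Theorem \ref{thm:energy}, where $a(n,\eps)$ will be (up to normalisation) the rescaled energies $\rho^{-n}S_n(\ce)[f]$ evaluated at a shifted exponent, whose limit as $n\to\infty$ is $\ce_\infty[f]$ by Theorem \ref{FS06}.
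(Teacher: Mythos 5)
Your proof is correct and follows essentially the same route as the paper's: both split the sum at a cutoff $N$ supplied by the joint limit hypothesis, control the tail via the geometric series asymptotics $\eps\lambda^\eps/(1-\lambda^\eps)\to 1/\log(1/\lambda)$, and dispose of the finite head as $\eps$ times a bounded quantity. The only cosmetic difference is that you subtract the constant $a_0$ before splitting, whereas the paper carries $a_0\pm\gamma$ through two-sided estimates; the substance is identical.
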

\begin{proof}
From the hypothesis, for all $\g>0$, there exist $n_0\in\bn$, $\eps_0>0$, such that $0<\eps<\eps_0$, $n>n_0\Rightarrow |a(n,\eps)-a_0|<\g$. Therefore,
\begin{align*}
\eps\sum_{n\in\bn} \lambda^{n\eps}a(n,\eps)
&< \eps \sum_{n\leq n_0} \lambda^{n\eps}a(n,\eps) + \eps \sum_{n>n_0} \lambda^{n\eps}(a_0+\g)\\
&<\eps\sum_{n\leq n_0} a(n,\eps)+\eps\frac{(a_0+\g) \lambda^{(n_0+1)\eps}}{1-\lambda^{\eps}}
\stackrel{\eps\to0}{\to}\frac{a_0+\g}{\log(1/\lambda)}.
\end{align*}
With similar estimates, we get 
$$
\lim_{\eps\to0^+}\eps
\sum_{n\in\bn} \lambda^{n\eps}a(n,\eps)=\frac{a_0-\gamma}{\log(1/\lambda)}.
$$
The thesis follows by the arbitrariness of $\gamma$.
\end{proof}

\begin{proofof}{Theorem \ref{thm:energy}}
Since 
$\displaystyle
 |[D,f]|^2|D|^{-s}=\bigoplus_{e\in E}\ell(e)^{s-2}|\langle \partial f,e\rangle|^2I_2$, 
\begin{align*}
\tr\big( |[D,f]|^2|D|^{-s}\big)
&=\sum_{n\in\bn}\sum_{e\in E_0}\sum_{|\s|=n}\ell(w_\sigma (e))^{s-2} |\langle \partial f,w_\sigma (e)\rangle|^2\\
&=\sum_{n\in\bn} e^{n((s-2)\log\lambda+\log\rho)}\sum_{e\in E_0}\ell(e)^{s-2}\rho^{-n}\sum_{|\s|=n} |\langle \partial f,w_\sigma (e)\rangle|^2.
\end{align*}
We observe that, by Theorem \ref{FS06}, the sequence
$$
\rho^{-n}\sum_{e\in E_0}\sum_{|\s|=n} |\langle \partial f,w_\sigma (e)\rangle|^2.
$$
converges, for $f\in\mathscr{F}$, to a suitable energy form, when $n\to\infty$, hence is bounded from above and from below by suitable constants $M_f$ and $m_f$.
Then, setting $\ell_{max}=\max_{e\in E_0}\ell(e)$, $\ell_{min}=\min_{e\in E_0}\ell(e)$, for $s<2$ we get
$$
m_f\ell_{max}^{s-2}\sum_{n\in\bn} e^{n((s-2)\log\lambda+\log\rho)}
\leq
\tr\big( |[D,f]|^2|D|^{-s}\big)
\leq
M_f\ell_{min}^{s-2}\sum_{n\in\bn} e^{n((s-2)\log\lambda+\log\rho)}.
$$
Therefore the series above converges {\it iff} $s>\delta:=2-\frac{\log \rho}{\log\lambda}$, which proves the first statement.

Let us now consider the energy functional $\ce$ on $C(V_0)$ in the statement. Then 
$\displaystyle
S_n(\ce)[f]
=\sum_{|\s|=n}\sum_{e\in E_0}\ell(e)^{\d-2} |\langle \partial f, w_\s(e)\rangle|^2$,
and $\rho^{-n}S_n(\ce)[f]$ converges to a self-similar Dirichlet form $\ce_\infty[f]$ by Theorem \ref{FS06}. 
Setting 
$\displaystyle
S_n(\ce,\eps)[f]
=\sum_{|\s|=n}\sum_{e\in E_0}\ell(e)^{\delta+\eps-2} |\langle f,\s(e)\rangle|^2
$, we get 
\begin{align*}
\Res_{s=\delta}&\tr (|[D,f]|^2\, |D|^{-s})
=\lim_{\eps\to0^+}\eps
\sum_{n\in\bn} \lambda^{n\eps}\rho^{-n}S_n(\ce,\eps)[f].
\end{align*}
Since $\ell_{min}^\eps S_n(\ce)[f]\leq S_n(\ce,\eps)[f]\leq \ell_{max}^\eps S_n(\ce)[f],$
 the function $(n,\eps)\mapsto \rho^{-n}S_n(\ce,\eps)[f]$ satisfies the hypothesis of Lemma \ref{Res=lim}, so that, when $f$ has finite energy,
$$
\Res_{s=\delta}\tr (|[D,f]|^2\, |D|^{-s})=\frac1{\log1/\lambda}\ce_\infty[f].
$$
\end{proofof}

\subsection{An example of non uniqueness}
When the fractal has a unique self-similar energy form, the Theorem above shows that such unique form can be obtained as a suitable residue. We now discuss the case of the Vicsek fractal, where uniqueness does not hold.

For the Vicsek snowflake, autoforms are parametrized, (up to a scalar multiple), by the conductances $(1,1,1,1,H,H^{-1})$, $H>0$, \cite{Metz}. The corresponding self-similar energies can be recovered with our approach via metric deformations,
 \begin{figure}[ht]
    \centering
	\includegraphics[width=1.8in]{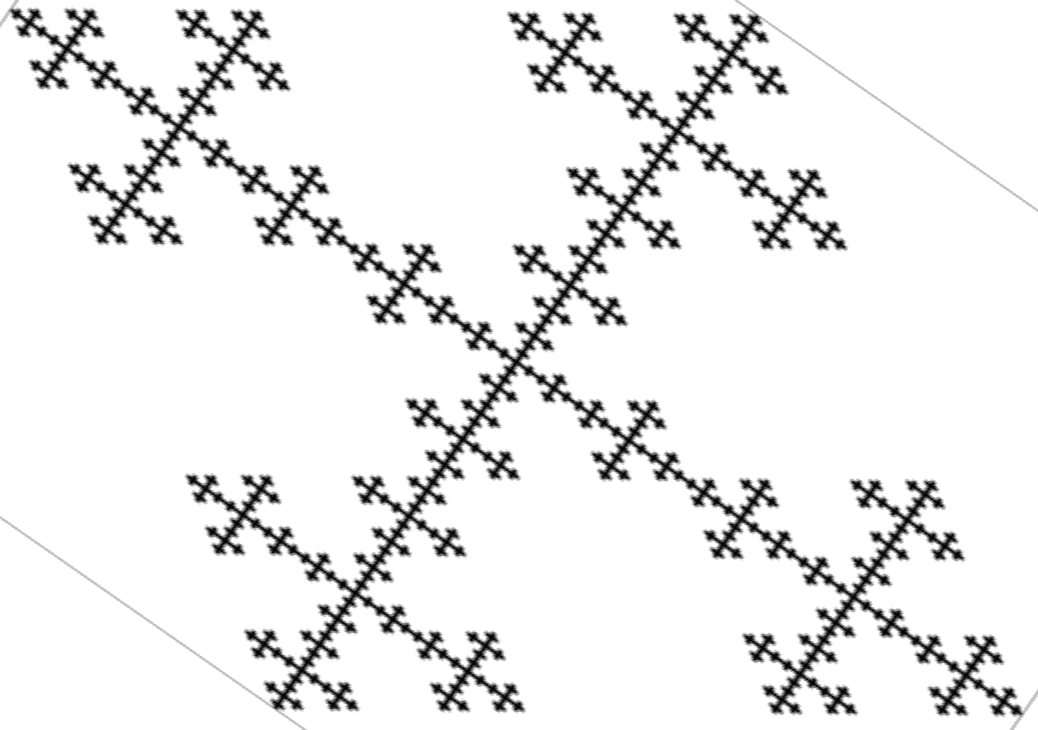} 
    \caption{Rhombic Vicsek snowflake}
    \label{fig:DeformedVicsek}
 \end{figure}
namely are associated with the fractal determined by 5 similitudes with scaling parameter 1/3, whose fixed points coincide with the 4 vertices of a rhombus and with the center of the rhombus itself. 
This means that with our approach the 1-parameter family of energies for the Vicsek square correspond to a 1-parameter family of metrically different fractals.
Let us remark that such deformed Vicsek do not satisfy the  symmetry part of the definition of nested fractal (cf. Definition \ref{def:nested} (5)), however our approach via spectral triples is anyway applicable (see Remark \ref{notnested}), in particular the Zeta function $Z_{|[D,f]|^2}(s)=\tr (|[D,f]|^2\, |D|^{-s})$ still makes sense. The existence of the limit involved in the residue formula and its relation with the energy form will be proven below.

We assume the side of the rhombus has length 1, and angle is $2\th\leq\pi$, so that the diagonals measure $2\sin\th$ and $2\cos\th$, and the ratio between the lengths of the diagonals is $\tan\th$.

\begin{Thm}
Let $K$ be the rhombic Vicsek with angle $2\th$ described above, with the spectral triple as in Definition \ref{DiscreteTriples}. Then the residue of the Zeta function $Z_{|[D,f]|^2}(s)=\tr (|[D,f]|^2\, |D|^{-s})$ at $\delta=1$  exists and
coincides (up to a multiple) with the self-similar energy associated with the eigenform with conductances $(1,1,1,1,H,H^{-1})$ on the Vicsek square, where $\displaystyle H=\frac{2+\sqrt{1+\tan^{2}\th}}{2+\sqrt{1+\cot^2 \th}}$. Let us note that any $H>0$ can be uniquely obtained from a $\th\in(0,\pi)$.
\end{Thm}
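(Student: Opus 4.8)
\emph{The plan.} I would first remove the non-nestedness of the rhombic Vicsek by an affine change of variables, thereby reducing the whole statement to a finite electrical-network computation on the genuine nested Vicsek square. Let $A\colon\br^2\to\br^2$ be the affine map taking the four vertices and the centre of the rhombus to the four vertices and the centre of the unit square. The five contractions defining the rhombic Vicsek and the five defining the Vicsek square are homotheties of ratio $1/3$ with these fixed points, and $A$ preserves barycentric combinations; hence $A\circ w^{rh}_i\circ A^{-1}=w^{sq}_i$, so $A$ identifies the cell structures $(V_n^{rh},E_n^{rh})$ and $(V_n^{sq},E_n^{sq})$. Consequently $S_n^{rh}(\ce^{(s)})[f]=S_n^{sq}(\widetilde\ce^{(s)})[f\circ A^{-1}]$ for all $n,f,s$, where $\ce^{(s)}[g]:=\sum_{e\in E_0}\ell(e)^{s-2}|\langle\partial g,e\rangle|^2$ is formed with the \emph{rhombic} edge lengths and $\widetilde\ce^{(s)}$ is the quadratic form on the square's $E_0$ with conductances $\ell_{rh}(\cdot)^{s-2}$ on the corresponding edges; at $s=1$ these are $1$ on the four sides and $\tfrac1{2\sin\th},\tfrac1{2\cos\th}$ on the two diagonals. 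Since the computation in the proof of Theorem~\ref{thm:energy} does not use the symmetry axiom (Remark~\ref{notnested}), it goes through verbatim and gives
\[
\tr\big(|[D,f]|^2|D|^{-s}\big)=\sum_{n\in\bn}\l^{\,n(s-2)}\,S_n^{sq}\big(\widetilde\ce^{(s)}\big)[f\circ A^{-1}],\qquad \l=\tfrac13 .
\]

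\emph{Step 2 — existence of the residue.} Next I apply Theorem~\ref{FS06} to $\widetilde\ce:=\widetilde\ce^{(1)}\in\mathscr D$ on the \emph{nested} fractal $K^{sq}$: for every finite-energy $g$ the sequence $\r^{-n}S_n^{sq}(\widetilde\ce)[g]$ converges to $\widetilde\ce_\infty[g]=(\ce')_\infty[g]$ for some eigenform $\ce'$, with $\r=\frac13$ the common eigenvalue of the Vicsek square. Because $\r=\l$, the $n$-th term above is comparable, with $n$-independent constants (using $\ell_{\min}^{\,s-1}\le\ell(\cdot)^{s-1}\le\ell_{\max}^{\,s-1}$), to $\l^{n(s-2)}\r^n=3^{-n(s-1)}$; hence the abscissa of convergence of $Z_{|[D,f]|^2}$ is $\d=1$, and Lemma~\ref{Res=lim} with $a(n,\eps)=\r^{-n}S_n^{sq}(\widetilde\ce^{(1+\eps)})[f\circ A^{-1}]$ yields
\[
\Res_{s=1}Z_{|[D,f]|^2}(s)=\frac1{\log3}\,(\ce')_\infty[f\circ A^{-1}]
\]
for $f$ in the finite-energy class of the rhombic Vicsek (those with $f\circ A^{-1}\in\mathscr F$).

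\emph{Step 3 — identifying the eigenform (the heart of the proof).} It remains to pin down $\ce'$. For a self-similar form coming from an eigenform $\ce'$, minimizing $(\ce')_\infty$ over extensions of a function on $V_0$ returns $\ce'$ itself, and the analogous minimization over $S_n(\widetilde\ce)$ produces $\Lambda^n(\widetilde\ce)$, where $\Lambda$ is the decimation map $\psi\mapsto$ ``effective form of $S_1(\psi)$ on $V_0$''; hence $\ce'$ is, up to a positive scalar, $\lim_n\r^{-n}\Lambda^n(\widetilde\ce)$, the usual identification of Peirone's eigenform, cf.~\cite{Pe04}. I would then compute $\Lambda$ on the forms left invariant by the $\bz/2\times\bz/2$ symmetry group of the rhombus, i.e.\ with conductances $(1,1,1,1,A,B)$. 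The level-$1$ graph is made of four corner cells and one central cell, each a copy of $K_4$, glued in pairs at four junction points; eliminating the two free vertices of each corner cell collapses it to a single edge of conductance $1+(\text{its diagonal conductance})$, and eliminating the four junction points of the remaining ``pendants $+$ central $K_4$'' network (which the symmetry block-diagonalizes) gives, in the variables $\alpha:=1+A$, $\beta:=1+B$, that $\Lambda$ multiplies the side conductance by $c':=\alpha\beta/\Delta$ and acts by
\[
(\alpha,\beta)\longmapsto\Big(\tfrac{\Delta}{3\beta},\ \tfrac{\Delta}{3\alpha}\Big),\qquad \Delta:=\alpha\beta+2\alpha+2\beta
\]
(as a sanity check, at the symmetric eigenform $\alpha=\beta=2$ one gets $c'=\frac13=\r$). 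Two facts then finish it: the ratio $\alpha/\beta$ is a conserved quantity, and along each level $\{\alpha/\beta=H\}$ the iteration reads $\alpha\mapsto\frac13(\alpha+2+2H)$, an affine contraction of ratio $\frac13$ whose fixed point $(\alpha^*,\beta^*)=(H+1,1+\tfrac1H)$ satisfies $\tfrac1{\alpha^*}+\tfrac1{\beta^*}=1$, i.e.\ is precisely the eigenform with diagonal conductances $(H,H^{-1})$. So $\lim_n\r^{-n}\Lambda^n(\widetilde\ce)$ is a multiple of $\ce_H$ with $H$ the invariant ratio evaluated at $\widetilde\ce$:
\[
H=\frac{1+\tfrac1{2\cos\th}}{1+\tfrac1{2\sin\th}}=\frac{2+\sqrt{1+\tan^{2}\th}}{2+\sqrt{1+\cot^2\th}}
\]
(the convention for labelling the two diagonals decides whether $H$ or $H^{-1}$ appears). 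Together with Step~2 this shows that $\Res_{s=1}Z_{|[D,f]|^2}(s)$ is a multiple of $(\ce_H)_\infty[f\circ A^{-1}]$, as claimed; and $\th\mapsto H$ is an elementary strictly monotone bijection onto $(0,\infty)$, so each $H>0$ comes from a unique $\th$.

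\emph{The main obstacle.} The delicate part is Step~3: carrying out the Schur-complement reduction of the level-$1$ Vicsek graph correctly and — above all — recognizing that the induced map on symmetric forms has $\alpha/\beta$ as a first integral taking the value $H$ at the eigenform; this is what delivers the closed formula for $H$ without any hand iteration. A lesser point that still needs care is the (classical but not entirely formal) identification of the eigenform from Theorem~\ref{FS06} with the renormalization limit $\lim_n\r^{-n}\Lambda^n(\widetilde\ce)$.
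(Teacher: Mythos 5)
Your proposal is correct, and its overall architecture matches the paper's: conjugate the rhombic Vicsek to the genuine Vicsek square by an affine map, note (as in Remark \ref{notnested}) that the proof of Theorem \ref{thm:energy} runs verbatim with the initial functional $\ce[f]=\sum_{e\in E_0}\ell(e)^{-1}|\langle\partial f,e\rangle|^2$ having constants $1$ on the sides and $(2\cos\th)^{-1},(2\sin\th)^{-1}$ on the diagonals, conclude $\d=1$ from $\l=\r=1/3$, and then identify the eigenform whose self-similar energy equals $\ce_\infty$. The difference is entirely in that last identification: the paper simply quotes De Cesaris's thesis for the map $(a,a,a,a,f,g)\mapsto(A,A,A,A,F,G)$ with $F/A=(a+f)/(a+g)$, whereas you derive it by computing the decimation map explicitly. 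I checked your Schur-complement computation: eliminating the two free vertices of a corner cell does give an edge of conductance $1+(\text{diagonal conductance})$, and eliminating the four junction vertices gives new side conductance $A'=\alpha\beta/\Delta$ and normalized $\alpha'=\Delta/(3\beta)$, $\beta'=\Delta/(3\alpha)$ with $\Delta=\alpha\beta+2\alpha+2\beta$; hence $\alpha/\beta$ is a first integral, the fixed point on $\{\alpha/\beta=H\}$ is $(H+1,1+H^{-1})$, and evaluating the invariant at the initial data gives $H=(1+\tfrac12\sqrt{1+\tan^2\th})/(1+\tfrac12\sqrt{1+\cot^2\th})$, which reproduces De Cesaris's formula exactly (and, incidentally, resolves a labelling slip in the paper, which writes $F/A=\frac{a+g}{a+f}$ but then displays $\frac{a+f}{a+g}$). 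What your route buys is a self-contained, checkable proof of the key formula in place of a citation to an unpublished thesis, plus the structural insight that $(1+A)/(1+B)$ is conserved under decimation. The one point you rightly flag as soft --- that the eigenform $\ce'$ of Theorem \ref{FS06} is $\lim_n\r^{-n}\Lambda^n(\widetilde\ce)$, i.e.\ that the infimum over extensions commutes with the limit --- is present in both proofs (the paper hides it inside the phrase ``the eigenform giving rise to the same energy''), and is covered by Peirone's convergence results, so it does not constitute a gap relative to the paper's own standard of rigor.
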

\begin{proof}
We observe that the proof of theorem \ref{thm:energy} applies verbatim. Indeed the Zeta function $Z_{|[D,f]|^2}(s)=\tr (|[D,f]|^2\, |D|^{-s})$ depends on the lengths of the edges of the deformed Vicsek, but these are only used to set up a functional $\ce$ on $C(V_0)$ given by $\ce[f]=\sum_{e\in E_0}\ell(e)^{-1} |\langle f,e\rangle|^2$, where
the constants are: $\ell(e)^{-1}=a=1$ for the sides of the rhombus, $\ell(e)^{-1}=g=(2\sin\th)^{-1}=\frac12 \sqrt{1+\cot^{2}\th}$ for the longer diagonal, 
$\ell(e)^{-1}=f=(2\cos\th)^{-1}=\frac12 \sqrt{1+\tan^2\th}$ for the shorter one. Such a functional may then be interpreted as a functional $\ce$ on $C(V_0)$ for the undeformed square. 
Since for the Vicsek square we have $\lambda=1/3$, $\rho=1/3$, we obtain 
that $\delta =1$ and the residue in $\delta=1$ for the Zeta function exists and is a multiple of an energy form on the  Vicsek square.


We then make use of  a computation of De Cesaris \cite{DeCesaris}, establishing a relation between   the constants $(a,a,a,a,f,g)$ for a functional on $C(V_0)$ and the conductances $(A,A,A,A,F,G)$, with $FG=A^2$, for the eigenform giving rise to the same energy:
$$
A=\frac{(a+f)(a+g)}{2a+f+g},\quad  F=\frac{(a+f)^2}{2a+f+g},\quad G=\frac{(a+g)^2}{2a+f+g}.
$$
Therefore the normalized $F$ is
$$
H=\frac{F}{A}=\frac{a+g}{a+f}=\frac{1+\frac12 \sqrt{1+\tan^2\th}}{1+\frac12 \sqrt{1+\cot^{2}\th}}
=\frac{2+\sqrt{1+\tan^2\th}}{2+\sqrt{1+\cot^{2}\th}}.
$$
\end{proof}

\section{On the recovery of the geodesic distance induced by the Euclidean structure}
As mentioned in the Introduction, for a given spectral triple $(\ca,\ch,D)$, the (possibly infinite) distance between states on the C$^*$-algebra $\overline\ca$ is given by \cite{ConnesBook}
$$
d_D(\f,\psi)=\sup\{|\f(a)-\psi(a)|:a\in \ca, \|[D,a]\|\leq1\}.
$$

\begin{Dfn}[An essential Lip-norm for spectral triples]
Let us consider the quotient map $p:\cb(\ch)\to\cb(\ch)/\ck$, namely to the Calkin algebra. Then, given a spectral triple $\ct:=(\ca,\ch,D)$, we consider the seminorm
\begin{equation}\label{QuoLipNorm}
\Less(a):=\|p([D,a])\|,\qquad a\in\ca.
\end{equation}
Replacing the seminorm $\|[D,a]\|$ with $\Less(a)$, we get a (possibly infinite) distance between states on $\overline\ca$
\end{Dfn}

We now restrict our attention to the case when $\ca$ is an abelian algebra. Then both seminorms give rise to (possibly infinite) distances on the  compact Hausdorff space $K$ given by the spectrum of the unital C$^*$-algebra generated by $\ca$ in $\cb(\ch)$ according to the following formulas:
\begin{align}
d_D(x,y)&=\sup\{|f(x)-f(y)|:f\in \ca, \|[D,f]\|\leq1\},\label{NCdistance}\\
\dess(x,y)&=\sup\{|f(x)-f(y)|:f\in\ca,\, \Less(f)\leq1\}.\label{essentialdistance}
\end{align}

%
%

We recall that a metric space $K$ is called {\it finitely arcwise connected} \cite{K} if any pair of points can be joined by a rectifiable curve.
Set $\dg(x,y):=\inf\{\ell(\gamma)|\g:[0,1]\to K$ is rectifiable and $\g(0)=x,\g(1)=y\}$.
If $K\subset\br^n$ is finitely arcwise connected, we call such a distance the Euclidean geodesic distance on $K$.

\begin{Thm}\label{thm:modLip}
Let $K$ be a finitely arcwise connected nested fractal.
The  (possibly infinite) distance induced by $\Less$ on $K$ is indeed finite, and coincides with the Euclidean geodesic distance on $K$.
\end{Thm}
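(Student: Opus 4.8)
The plan is to show the two inequalities $\dess(x,y)\le\dg(x,y)$ and $\dess(x,y)\ge\dg(x,y)$ separately, the first being the easy direction and the second the source of all the difficulty.

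\smallskip

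\textbf{Upper bound.} For the inequality $\dess(x,y)\le\dg(x,y)$, I would first relate the essential norm $\Less(f)=\|p([D,f])\|$ to a Lipschitz-type quantity at infinitely fine scales. Recall $[D,f]$ is diagonal on oriented edges with eigenvalue $\ell(e)^{-1}\langle\partial f,e\rangle$ on the edge $e$; since the Calkin quotient kills the compact (essentially finitely supported) part, one gets
\[
\Less(f)=\limsup_{n\to\infty}\ \max_{e\in E_n}\ \frac{|f(e^+)-f(e^-)|}{\ell(e)}=\inf_{N}\ \sup_{n\ge N}\ \max_{e\in E_n}\frac{|\langle\partial f,e\rangle|}{\ell(e)}.
\]
Thus $\Less(f)\le1$ means that for large $n$, $f$ varies by at most $\ell(e)$ across each edge of $E_n$. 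Given a rectifiable curve $\gamma$ from $x$ to $y$, I would approximate it by a chain of $n$-cell edges and telescope: $|f(x)-f(y)|\le\sum|\langle\partial f,e_i\rangle|\le(1+o(1))\sum\ell(e_i)$, and the total edge length along a curve-following path converges to $\ell(\gamma)$ (here one uses finite arcwise connectedness and the nesting property (4), which guarantees that $n$-cells meet only along $V_0$-images, so a curve can be tracked cell-by-cell). Taking $\sup$ over such $f$ and $\inf$ over $\gamma$ gives $\dess(x,y)\le\dg(x,y)$; in particular $\dess$ is finite.

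\smallskip

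\textbf{Lower bound.} For $\dess(x,y)\ge\dg(x,y)$, I would exhibit, for each pair $x,y$, a function $f\in\ca$ with $\Less(f)\le1$ and $|f(x)-f(y)|=\dg(x,y)$ — the natural candidate being $f=\dg(x,\cdot)$, or a regularization thereof. The key facts needed are: (a) $\dg$ is a genuine metric on $K$ inducing the Euclidean topology (finiteness is part of the hypothesis after the upper bound, or follows from Barlow–Kigami-type arguments for nested fractals), (b) $g:=\dg(x,\cdot)$ is $1$-Lipschitz for $\dg$, hence for each edge $e$, $|\langle\partial g,e\rangle|\le\dg(e^+,e^-)$, and (c) one must compare $\dg(e^+,e^-)$ with $\ell(e)$, the \emph{Euclidean} edge length, uniformly at fine scales. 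Point (c) is where the symmetry axiom (5) and the self-similar structure enter decisively: by self-similarity $\dg(w_\sigma(p),w_\sigma(q))=\lambda^{|\sigma|}\dg(p,q)$ for $p,q\in V_0$, while $\ell(w_\sigma(e))=\lambda^{|\sigma|}\ell(e)$, so the ratio $\dg(e^+,e^-)/\ell(e)$ takes only finitely many values, all $\ge1$; the worst case is a bounded constant $c_0\ge1$. Then $f:=g/c_0$ — or better, a reparametrized arclength function chosen so that the ratio is exactly $1$ asymptotically — satisfies $\Less(f)\le1$. The honest subtlety is that $g/c_0$ only gives $|f(x)-f(y)|=\dg(x,y)/c_0$, losing the constant; to recover the sharp constant one must instead use that along a \emph{geodesic} from $x$ to $y$ the Euclidean length of the traversed edges asymptotically equals the geodesic length (the curve is locally "straight" inside cells), so the natural test function is geodesic arclength measured along an actual minimizing path, whose increments across fine edges match $\ell(e)$ up to $1+o(1)$. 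This makes $\Less(f)\le1$ and $|f(x)-f(y)|=\dg(x,y)$ simultaneously.

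\smallskip

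\textbf{Main obstacle.} The crux — and the step I expect to occupy most of the proof — is the uniform scale-by-scale comparison between the \emph{intrinsic} geodesic distance across an $n$-edge and its \emph{Euclidean} length $\ell(e)$, i.e. showing that a $\dg$-geodesic becomes asymptotically length-efficient with respect to the Euclidean edge lengths as the scale refines, with the \emph{exact} constant $1$ and not merely a bounded one. This is precisely where one needs the geodesic to "see" only finitely many combinatorial configurations (via self-similarity and the nesting property) and to rule out that every geodesic is forced to detour — here the symmetry property (5) is what prevents pathological geometry and lets one identify the extremal edges. Everything else (the Calkin-quotient computation of $\Less$, telescoping along edge-chains, the Riesz/Riemann-approximation bookkeeping, invoking Theorem~\ref{FS06}-style self-similarity) is routine by comparison.
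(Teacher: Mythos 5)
Your upper bound $\dess(x,y)\leq\dg(x,y)$ is sound and follows the paper's route: the Calkin-algebra computation $\Less(f)=\inf_n\sup_{e\in E_{n,\infty}}|\langle\partial f,e\rangle|/\ell(e)$, projection of a rectifiable curve to an edge-path of no greater length (the paper's Lemma \ref{discr-curve}), and telescoping. The genuine gap is in your lower bound. Your candidate test function $g=\dg(x,\cdot)$ (or arclength along a minimizing path) does \emph{not} satisfy $\Less(g)\leq1$, and your proposed rescue --- that ``along a geodesic the Euclidean length of the traversed edges asymptotically equals the geodesic length'' so that the ratio $\dg(e^+,e^-)/\ell(e)$ tends to $1$ at fine scales --- is false. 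The Vicsek snowflake is the standard counterexample (and is exactly why the paper introduces $\Less$ at all, cf.\ Remark (b) after the theorem): for $e=(w_\s(p),w_\s(q))$ with $p,q$ adjacent corners of the unit square, $\ell(e)=3^{-|\s|}$ while $\dg(e^+,e^-)=\sqrt2\cdot 3^{-|\s|}$, because the fractal contains no straight segment joining the two endpoints and every geodesic detours through the center. By self-similarity this ratio $\sqrt2$ persists at \emph{every} scale, so $\Less(\dg(x,\cdot))=\sqrt2$ and dividing by the worst constant $c_0$ loses exactly the factor you cannot afford to lose. Geodesics on nested fractals are not ``locally straight inside cells''; the symmetry axiom (5) does nothing to prevent this.

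The correct test functions are the discrete path metrics: fix $n$ and take $f=d_{n,\infty}(x,\cdot)$, the graph distance using only edges of level $\geq n$. For every edge $e$ of level $\geq n$ the edge itself is a competitor path, so $|\langle\partial f,e\rangle|\leq d_{n,\infty}(e^+,e^-)\leq\ell(e)$ automatically, whence $\Less(f)\leq L_{n,\infty}(f)\leq1$ with no constant to fight; and $|f(x)-f(y)|=d_{n,\infty}(x,y)=d_n(x,y)$. This reduces the lower bound to showing $\lim_n d_n(x,y)=\dg(x,y)$, which is the real analytic content of the paper's proof (Theorem \ref{dgeo=limdn}): one takes level-$n$ geodesic edge-paths, extracts by a diagonal/pigeonhole argument a coherent family $g_n$ with $\Delta_k(g_n)=g_k$, and builds from it a rectifiable limit curve realizing $\lim_n d_n$ (Lemma \ref{limitcurve}), so that the increasing, $\dg$-bounded sequence $d_n(x,y)$ cannot converge to anything smaller than $\dg(x,y)$. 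Your proposal contains no substitute for this compactness step, and the extension from vertices of $V$ to arbitrary points of $K$ (density of $\g^{-1}(V)$ along a rectifiable curve plus continuity of $f$) also needs to be spelled out.
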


\begin{rem}
$(a)$ Let us observe that, for the spectral triples usually associated with (possibily noncommutative) manifolds, the spectrum of $[D,a]$ has no non-essential parts, hence the seminorm $L_{\text{ess}}$ coincides with the usual seminorm $\|[D,a]\|$. 
\\
$(b)$ 
When $K$ is a nested fractal, the seminorm $\|[D,a]\|$ produces a distance which is intermediate between the Euclidean distance and the Euclidean geodesic distance on $K$. For example, such distance takes value 1 between the adjacent vertices of a Vicsek snowflake of side 1 instead of the value $\sqrt2$ given by the Euclidean geodesic distance.
\end{rem}

The proof of the Theorem will require some steps.

\subsection{Small triples}

Given the graph $(V_n, E_n)$, let $\ct_n:=\bigoplus_{|\s|=n}\ct_\s$ be the triple  on $\cc(V_n)$ as in Definition \ref{DiscreteTriples}, and the distance $d_n$ on $V_n$  considered as a path space, where paths consist of finite unions of consecutive edges in $E_n$, with length the sum of the lengths of the edges.

\begin{Lemma}\label{LipNorm}
The Lipschitz seminorm 
$$
L_n(f)=\sup_{x\ne y\in V_n}\frac{|f(x)-f(y)|}{d_n(x,y)}
$$ 
induced by $d_n$ coincides with the seminorm $L_{D_n}$ associated with the triple  $\ct_n$. As a consequence, the noncommutative distance induced by $L_{D_n}$ coincides with $d_n$.
\end{Lemma}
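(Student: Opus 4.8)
The plan is to compute the operator norm $\|[D_n,f]\|$ directly from the block structure of $\ct_n$, recognise it as an ``edge Lipschitz constant'' relative to $E_n$, and then check that this constant agrees with the path-metric Lipschitz constant $L_n$; the statement about distances then follows from the classical Kantorovich--Rubinstein duality.

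First I would unravel Definition \ref{DiscreteTriples}. By Remark (1) following it, each $\ct_\s$ splits, over unoriented edges, into a direct sum of $2\times2$ blocks: for $|\s|=n$ and $e=(e^+,e^-)\in E_0$ the block attached to $e$ carries $\pi_\s(f)=\mathrm{diag}\big(f(w_\s(e^+)),\,f(w_\s(e^-))\big)$ and $D_\s=\big(\lambda^{n}\ell(e)\big)^{-1}\left(\begin{smallmatrix}0&1\\1&0\end{smallmatrix}\right)$, where I use that $\lambda^{n}\ell(e)=\|w_\s(e^+)-w_\s(e^-)\|=\ell(w_\s(e))$ since $w_\s$ is a similarity of ratio $\lambda^n$. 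A one-line matrix computation gives, on that block, $\|[D_\s,\pi_\s(f)]\|=|\langle\partial f,w_\s(e)\rangle|/\ell(w_\s(e))$. Since the norm of a block-diagonal operator is the supremum of the block norms, and $\{w_\s(e):|\s|=n,\ e\in E_0\}$ is exactly the edge set $E_n$ (if two multi-indices yield the same edge the corresponding blocks carry the same value, so there is no ambiguity), I obtain
\[
L_{D_n}(f)=\|[D_n,f]\|=\sup_{e'\in E_n}\frac{|\langle\partial f,e'\rangle|}{\ell(e')}.
\]

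It then remains to identify the right-hand side with $L_n(f)$. One inequality is immediate: an edge $e'=(x,y)$ is itself a path, so $d_n(x,y)\le\ell(e')$, whence $|\langle\partial f,e'\rangle|/\ell(e')\le|f(x)-f(y)|/d_n(x,y)\le L_n(f)$ and thus $L_{D_n}(f)\le L_n(f)$. For the converse, the graph $(V_n,E_n)$ is connected (this is standard for nested fractals and follows from axiom (3) of Definition \ref{def:nested} together with self-similarity), so $d_n$ is a genuine finite metric, and since $(V_n,E_n)$ has only finitely many simple paths, for any $x\ne y$ in $V_n$ there is a path $x=p_0,\dots,p_s=y$ with $(p_{i-1},p_i)\in E_n$ and $\sum_i\ell\big((p_{i-1},p_i)\big)=d_n(x,y)$. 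Then
\[
|f(x)-f(y)|\ \le\ \sum_{i=1}^{s}|f(p_{i-1})-f(p_i)|\ \le\ L_{D_n}(f)\sum_{i=1}^{s}\ell\big((p_{i-1},p_i)\big)\ =\ L_{D_n}(f)\,d_n(x,y),
\]
so $L_n(f)\le L_{D_n}(f)$; hence $L_n=L_{D_n}$. For the last assertion, note that $\cc(V_n)$ is abelian and finite-dimensional and $\pi_n$ is faithful (every point of $V_n$ is some $w_\s(p)$), so the underlying space is $V_n$; from $L_{D_n}=L_n$, the distance induced by $L_{D_n}$ equals $\sup\{|f(x)-f(y)|:f\in\cc(V_n),\ L_n(f)\le1\}$, which is $d_n(x,y)$ by taking the $1$-Lipschitz competitor $f=d_n(\cdot,x)$ (and, on arbitrary states, it is the associated transport distance).

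I do not expect a serious obstacle. The only points requiring care are the operator-norm computation — correctly tracking the scaling factor $\lambda^{-n}$ and checking that the supremum of the $2\times2$ block norms is exactly the supremum over all edges of $E_n$ — and the observation that connectedness of $(V_n,E_n)$ makes $d_n$ a finite metric, so that the Lipschitz/path-metric duality applies; the comparison $L_{D_n}=L_n$ itself is then entirely elementary.
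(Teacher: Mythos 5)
Your proposal is correct and follows essentially the same route as the paper: identify $L_{D_n}(f)$ with $\max_{e\in E_n}|\langle\partial f,e\rangle|/\ell(e)$ from the block structure, get $L_{D_n}\le L_n$ because each edge is itself a path, get the converse by summing over a geodesic path in $(V_n,E_n)$, and conclude the distance statement by the standard Lipschitz duality (which the paper simply cites as known, while you exhibit the competitor $f=d_n(\cdot,x)$). The only difference is that you spell out the $2\times2$ operator-norm computation and the duality step in more detail; no gap.
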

\begin{proof} 
By definition, 
$L_{D_n}(f)=\|[D_n,f]\|=\displaystyle\max_{e\in E_n}\frac{|\partial f(e)|}{\ell(e)}$.
Since for any $e\in E_{n}$ $d_{n}(e^+,e^-)=\ell(e)$, $L_{D_n}(f)\leq L_{n}(f)$. Conversely, given $x,y\in V_n$, let $\g=(e_1,\dots e_k)$ be a geodesic path connecting $x$ and $y$, with $e_j\in E_{n}$, $j=1,\dots,k$, so that $d_{n}(x,y)=\displaystyle\sum_{j=1}^k\ell(e_j)$. Then
\begin{align*}
\frac{|f(x)-f(y)|}{d_{n}(x,y)}
&\leq\sum_{j=1}^k\frac{|\langle\partial f,e_j\rangle|}{d_{n}(x,y)}
\leq\max_{j=1,\dots,k}\frac{|\langle\partial f,e_j\rangle|}{\ell(e_j)}\sum_{j=1}^k\frac{\ell(e_j)}{d_{n}(x,y)}
\leq L_{D_n}(f).
\end{align*}
It is  known that the distance on points induced by $L_n$ as in \eqref{essentialdistance} coincides with $d_n$, from which the last statement follows.
\end{proof}

Let us denote by $\cp(V_n,E_n)$ the set of finite paths in $(V_n,E_n)$, the length of a path being induced by the distance $d_n$. The subsets of simple paths in $\cp(V_n,E_n)$, \textit{i.e.} those that visit any vertex at most once, is denoted $\cp_S(V_n,E_n)$. Any simple path  is determined by the finite sequence of its vertices, viceversa any  sequence $(x_0,\dots,x_k)$ of pairwise distinct vertices in $V_n$ gives rise to a simple path if $(x_{j-1},x_j) $ belongs to $E_n$.

\begin{Lemma}\label{discr-curve}
Let $x,y\in V_n$,  $\g:[0,1]\to K$ a simple curve joining $x$ with $y$.
Then there exists a finite sequence $(t_0,\dots,t_q)$ of points in $[0,1]$ such that
\begin{enumerate}
\item $t_{j-1}<t_j$, $j=1,\dots,q$;
\item $\g(t_j)\in V_n$, $j=0,\dots,q$, $\g(t_0)=x$, $\g(t_j)\ne y$, $j=0,\dots,q-1$, $\g(t_i)\ne\g(t_j), i\ne j\in\{0,\dots,q\}$;
\item $\{\g(t),t_{j-1}<t<t_j\}\cap V_n=\emptyset$, $j=1,\dots,q$;
\item $\g(t_q)=y$.
\end{enumerate}
The sequence $(x_0=\g(t_0),\dots,x_q=\g(t_q))$ determines a simple path in $\cp(V_n,E_n)$ which will be denoted by $\Delta_n(\g)$. If $\g$ is rectifiable, $\ell(\Delta_n(\g))\leq\ell(\g)$. 
\end{Lemma}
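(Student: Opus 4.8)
The plan is to take for the $t_j$'s nothing more than the (finitely many) instants at which $\g$ visits $V_n$, listed in increasing order, and then to show that consecutive visited points span an edge of $E_n$. The arc-length bound will then follow from elementary metric geometry.

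Reparametrising if necessary we may assume $\g$ is injective, in particular $x\neq y$ (the degenerate case $x=y$ is irrelevant for the intended application to $\dg$). Since $V_n$ is finite and $\g$ is injective, $\g^{-1}(V_n)\subset[0,1]$ is finite, and it contains $0$ and $1$ because $\g(0)=x$, $\g(1)=y\in V_n$; write it as $\{t_0<t_1<\cdots<t_q\}$, so that $t_0=0$, $t_q=1$, $q\geq1$, and set $x_j:=\g(t_j)$. Then (1) and (4) hold by construction; for (3), if $t_{j-1}<t<t_j$ then $t\notin\g^{-1}(V_n)$, i.e.\ $\g(t)\notin V_n$; and (2) holds because the $x_j$ are pairwise distinct by injectivity of $\g$, while $x_j\neq x_q=y$ for $j<q$ since $t_j<t_q$.

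The one place where the nested structure enters is in checking that $(x_{j-1},x_j)\in E_n$. First, for $|\s|=n$ one has $w_\s(K)\cap V_n=w_\s(V_0)$: indeed a point $w_{\s'}(p)\in w_\s(K)$ with $\s'\neq\s$ lies in $w_\s(K)\cap w_{\s'}(K)=w_\s(V_0)\cap w_{\s'}(V_0)$ by the nesting property (4) of Definition \ref{def:nested}. Consequently
$$
K\setminus V_n=\bigsqcup_{|\s|=n}\bigl(w_\s(K)\setminus V_n\bigr),
$$
and each piece, being equal to $w_\s(K)\cap(K\setminus V_n)$ with $w_\s(K)$ compact, is relatively closed in $K\setminus V_n$; since there are finitely many of them, each is also relatively open, so this is a partition into relatively clopen sets. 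Hence every connected subset of $K\setminus V_n$ lies in a single $w_\s(K)\setminus V_n$. Applying this to the connected set $\g\bigl((t_{j-1},t_j)\bigr)$, which misses $V_n$ by (3), we obtain $\s_j$ with $\g\bigl((t_{j-1},t_j)\bigr)\subset w_{\s_j}(K)$, hence by continuity and closedness of $w_{\s_j}(K)$ also $\g\bigl([t_{j-1},t_j]\bigr)\subset w_{\s_j}(K)$; in particular $x_{j-1},x_j\in w_{\s_j}(K)\cap V_n=w_{\s_j}(V_0)$, and since $w_{\s_j}$ is injective and $x_{j-1}\neq x_j$, we get $e_j:=(x_{j-1},x_j)\in E_n$. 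Thus $(x_0,\dots,x_q)$ is an edge-path, simple because the $x_j$ are distinct, which is $\Delta_n(\g)$.

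For the length estimate, when $\g$ is rectifiable the edge $e_j$ has Euclidean length $\ell(e_j)=\|x_j-x_{j-1}\|\leq\ell\bigl(\g|_{[t_{j-1},t_j]}\bigr)$, and summing over $j$ and using additivity of arc length along the partition $0=t_0<\cdots<t_q=1$ gives
$$
\ell(\Delta_n(\g))=\sum_{j=1}^q\ell(e_j)\leq\sum_{j=1}^q\ell\bigl(\g|_{[t_{j-1},t_j]}\bigr)=\ell(\g).
$$
I expect the crux to be the middle paragraph: identifying $(x_{j-1},x_j)$ with an edge forces one to confine the arc-segment $\g([t_{j-1},t_j])$ within a single $n$-cell, which is exactly where the nesting property is used; everything else is point-set topology together with standard facts about arc length.
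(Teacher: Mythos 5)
Your proof is correct and follows essentially the same route as the paper's: both take the $t_j$ to be the successive times at which $\g$ meets $V_n$ (the paper builds them by an explicit induction, you enumerate $\g^{-1}(V_n)$ directly, which is legitimate since simplicity makes that set finite), and both derive $(x_{j-1},x_j)\in E_n$ by trapping the intermediate arc in a single $n$-cell and using that the graph of a cell is complete. The only cosmetic difference is that you justify the single-cell containment via the nesting property and a relatively clopen partition of $K\setminus V_n$, whereas the paper invokes the open set condition and interiors of cells; both arguments are sound.
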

\begin{proof}
The sequence $(t_0,\dots,t_q)$ will be constructed by induction.\\
Base of the induction: set $t_0=0$.
\\
Step of the induction: given $(t_0,\dots,t_p)$ satisfying $(1), (2), (3)$, either $(4)$ is satisfied, hence the induction stops, or $(4)$ is not satisfied, in which case we set $t_{p+1}=\inf\Omega_p$, with
$$
\Omega_p=\big\{t\in[0,1]:\g(t)\in V_n\setminus\{\g(t_0),\dots,\g(t_p)\}\big\},
$$
and observe that $\O_p$ is not empty since $1\in\O_p$ and $t_{p+1}$ is a minimum since $\O_p$ is compact.
Note that $t_{p+1}\not\in\{t_0,\dots, t_p\}$ by construction and $t_{p+1}\not\in\cup_{j=1,\dots,p}(t_{j-1},t_j)$ by $(3)$, hence $t_{p+1}>t_p$, namely $(t_0,\dots,t_{p+1})$ satisfies $(1)$. It obviously satisfies $(2)$, and satisfies $(3)$ by the minimality of $t_{p+1}$.

We now observe that, by property $(3)$, $G_j=\{\g(t):t_{j-1}<t<t_j\}$ does not intersect $V_n$, hence it is contained in $\cup_{|\sigma|=m}\dot{C}_\s$. Open set condition and connectedness of $G_j$ imply that there exists a single cell $C$ of level $n$ such that $G_j\subset\dot{C}$,  hence $x_{j-1}=\g(t_{j-1})$ and $x_j=\g(t_j)\in C$. Since the graph of a cell is complete, $(x_{j-1},x_j)\in E_n$, therefore $(x_0,\dots,x_q)$ determines a path in $\cp(V_n,E_n)$. The simplicity of $\Delta_n(\g)$ and the inequality are obvious.
\end{proof}

\begin{Lemma}\label{dn<dn+1}
Let $x,y\in V_n$.
\begin{enumerate}
\item $d_n(x,y)\leq d_{n+1}(x,y)$.
\item If, for all $n\in\bn$,  any edge of level $n$ is the union of edges of level $n+1$, the previous inequality is indeed an equality.
\item $d_n(x,y)\leq\dg(x,y)$.
\end{enumerate}
\end{Lemma}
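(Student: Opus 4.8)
The plan is to prove the three items essentially by unravelling the definitions of the path metrics $d_n$, together with the cell/edge structure of the nested fractal.

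For $(1)$, I would argue that any path in $(V_n,E_n)$ realizes a path in $(V_{n+1},E_{n+1})$ of the same length. Indeed an edge $e\in E_n$ has $e = (w_\s(p),w_\s(q))$ with $|\s|=n$ and $p,q\in V_0$; by connectedness of the graph $(V_1,E_1)$ (Definition \ref{def:nested}(3)), the two points $p,q$ are joined in $(V_1,E_1)$ by a path whose edges, pushed forward by $w_\s$, are edges of $E_{n+1}$ lying inside the single $n$-cell $w_\s(K)$. The total Euclidean length of such a connecting path is at least $\ell(e)=\norm{w_\s(p)-w_\s(q)}$ by the triangle inequality. Hence every $d_n$-path from $x$ to $y$ (with $x,y\in V_n$) can be rewritten as a $d_{n+1}$-path of length $\le$ its $d_n$-length; taking infima gives $d_{n+1}(x,y)\le d_n(x,y)$. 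Wait — this gives the wrong direction, so in fact the correct observation is the reverse: a $d_n$-path refined into a $d_{n+1}$-path becomes \emph{longer} (it now follows edges inside the cell instead of the straight chord), so $d_{n+1}$, being the infimum over a \emph{larger but ``finer''} family, satisfies $d_n(x,y)\le d_{n+1}(x,y)$ because any $d_{n+1}$-path, read at scale $n$, gives a $d_n$-path that is no longer — here one collapses consecutive $E_{n+1}$-edges inside a common $n$-cell to a single $E_n$-edge, using that the graph of a cell is complete and that the chord is shorter than any broken line. This collapsing argument is the clean way to see $(1)$.

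For $(2)$, under the extra hypothesis that each level-$n$ edge is exactly a union of level-$(n+1)$ edges, the refinement of a $d_n$-path does \emph{not} increase length (each edge is split into subedges whose lengths sum to the original, since they are collinear), giving $d_{n+1}(x,y)\le d_n(x,y)$; combined with $(1)$ this yields equality.

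For $(3)$ I would combine $(1)$ with Lemma \ref{discr-curve}. Given a rectifiable curve $\g$ joining $x,y\in V_n$, first reduce to the case of a \emph{simple} curve (a rectifiable arc joining two points contains a simple rectifiable subarc joining them, of no greater length). Then Lemma \ref{discr-curve} produces a simple path $\Delta_n(\g)\in\cp(V_n,E_n)$ with $\ell(\Delta_n(\g))\le\ell(\g)$, whence $d_n(x,y)\le\ell(\Delta_n(\g))\le\ell(\g)$. Taking the infimum over all rectifiable $\g$ from $x$ to $y$ gives $d_n(x,y)\le\dg(x,y)$.

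The main obstacle is getting the direction of the inequality in $(1)$ right and justifying it cleanly: one must show that any sequence of consecutive $E_{n+1}$-edges lying in a single $n$-cell can be replaced by a single $E_n$-edge (the chord) without increasing length, which uses both that the vertex graph of a cell is complete (so the chord is an available edge) and the Euclidean triangle inequality. Everything else is a matter of bookkeeping with the already-established Lemma \ref{discr-curve} and the open set/nesting properties.
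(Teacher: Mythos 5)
Your proposal is correct and takes essentially the same route as the paper: items (1) and (3) are both obtained by collapsing a finer path, respectively a rectifiable curve, to a path in $(V_n,E_n)$ via the discretization $\Delta_n$ of Lemma \ref{discr-curve} --- which is precisely the chord-shortening/collapsing argument you describe (complete graph on each $n$-cell plus the triangle inequality) --- and item (2) by subdividing each level-$n$ edge into the collinear level-$(n+1)$ edges covering it, whose lengths sum to the original. The only difference is presentational: the paper states (1) as a one-line application of $\Delta_n$ to a geodesic $(n+1)$-path, while you spell the mechanism out (after a false start in the wrong direction that you correctly retract).
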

\begin{proof}
$(1)$ Let $\g$ be a geodesic path in $\cp(V_{n+1},E_{n+1})$ connecting $x$ with $y$. By the preceding Lemma,
$$
d_{n}(x,y)\leq\ell(\Delta_n(\g))\leq\ell(\g)=d_{n+1}(x,y).
$$
\\
$(2)$ Given a geodesic path $\g'\in\cp(V_{n},E_{n})$ connecting $x$ with $y$, we may replace any of its edges by the edges of level $n+1$ that cover it. In this way we get a path $\g\in \cp(V_{n+1},E_{n+1})$ with the same length.
\\
$(3)$ Let  $\g$ be a rectifiable curve in $K$ connecting $x$ with $y$.
We have $d_{n}(x,y)\leq\ell(\Delta_n(\g))\leq\ell(\g)$. Taking the infimum over all rectifiable $\gamma$'s we get the thesis.
\end{proof}

\begin{Lemma}\label{dense-rect}
Let $\g:[0,1]\to\br^n$ be a curve, $G$ a dense subset in $[0,1]$, $\ell_G(\g)$ given by
$$
\ell_G(\g)=\sup\{\sum_{j=1}^n |\g(t_j)-\g(t_{j-1} )| : t_0<t_1<\dots<t_n\in G, n\in\bn\}.
$$
Then $\ell_G(\g)=\ell(\g)$, in particular $\g$ is rectifiable {\it iff} $\ell_G(\g)<\infty$
\end{Lemma}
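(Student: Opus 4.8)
\textbf{Proof strategy for Lemma \ref{dense-rect}.}
The plan is to prove the two inequalities $\ell_G(\g)\le\ell(\g)$ and $\ell(\g)\ge\ell_G(\g)$ is false—rather $\ell_G(\g)\le\ell(\g)$ trivially and $\ell(\g)\le\ell_G(\g)$ by a density/approximation argument. The first inequality is immediate: since $G\subset[0,1]$, every partition $t_0<t_1<\dots<t_n$ with $t_j\in G$ is in particular a partition of $[0,1]$, so each sum $\sum_j|\g(t_j)-\g(t_{j-1})|$ appearing in the definition of $\ell_G(\g)$ is one of the sums appearing in the definition of $\ell(\g)$; taking suprema gives $\ell_G(\g)\le\ell(\g)$.

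For the reverse inequality, I would fix an arbitrary partition $0\le s_0<s_1<\dots<s_n\le 1$ and a tolerance $\eta>0$, and use that $\g$ is continuous (hence uniformly continuous on the compact interval $[0,1]$) to choose $\mu>0$ so that $|\g(t)-\g(t')|<\eta/(2n)$ whenever $|t-t'|<\mu$. By density of $G$, pick $t_j\in G$ with $|t_j-s_j|<\mu$, and—by shrinking $\mu$ if necessary so that consecutive $s_j$'s are farther apart than $2\mu$, or simply by discarding a finite set and relabelling—arrange that $t_0<t_1<\dots<t_n$. Then a triangle-inequality estimate gives
$$
\sum_{j=1}^n|\g(s_j)-\g(s_{j-1})|\le \sum_{j=1}^n|\g(t_j)-\g(t_{j-1})| + \sum_{j=1}^n\big(|\g(s_j)-\g(t_j)|+|\g(s_{j-1})-\g(t_{j-1})|\big)\le \ell_G(\g)+\eta.
$$
Since $\eta>0$ is arbitrary, $\sum_j|\g(s_j)-\g(s_{j-1})|\le\ell_G(\g)$, and taking the supremum over all partitions of $[0,1]$ yields $\ell(\g)\le\ell_G(\g)$. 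Combining the two inequalities gives $\ell_G(\g)=\ell(\g)$, and the final clause follows since rectifiability of $\g$ means precisely $\ell(\g)<\infty$.

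The one point requiring slight care—the main (minor) obstacle—is ensuring the approximating points $t_j\in G$ can be chosen strictly increasing: if two consecutive $s_j$ are already very close together, a naive choice might violate $t_{j-1}<t_j$. This is handled either by first thinning the original partition so that successive $s_j$ differ by more than $2\mu$ (which only decreases the left-hand sum, hence is harmless for an upper bound), or by noting that if $t_{j-1}\ge t_j$ one may simply drop $t_j$ from the list, which again does not increase the relevant sum. Everything else is a routine uniform-continuity estimate.
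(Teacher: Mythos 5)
Your proof is correct and follows essentially the same route as the paper's: the inequality $\ell_G(\g)\le\ell(\g)$ is immediate, and the reverse is obtained by approximating an arbitrary partition by an increasing sequence in $G$ (density plus continuity of $\g$) and applying the triangle inequality, exactly as in the paper. One small caveat: of your two fallbacks for arranging $t_0<\dots<t_n$, only the first (shrinking $\mu$ below half the minimal gap between consecutive $s_j$) is sound --- thinning the partition \emph{decreases} the left-hand sum, so a bound for the thinned partition does not bound the original one --- but since the first option suffices, the argument stands.
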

\begin{proof}
Clearly $\ell_G(\g)\leq\ell(\g)$.
Given $t_0<t_1<\dots<t_n\in [0,1]$, choose $s_0<s_1<\dots<s_n\subset G$ such that $| \g(t_j)-\g(s_j)|<\eps/n$, $j=0,\dots n$. Then
\begin{align*}
 \sum_{j=1}^n |\g(t_j)-\g(t_{j-1} )|
 \leq & \sum_{j=1}^n \Big( |\g(s_j)-\g(s_{j-1} )| + |\g(t_j)-\g(s_j)| + |\g(t_{j-1} )- \g(s_{j-1} )| \Big) \\
 \leq&\ell_G(\g)+2\eps,
\end{align*}
hence $\ell(\g)\leq \ell_G(\g)+2\eps$.
The thesis follows by the arbitrariness of $\eps$.
\end{proof}

\begin{Lemma}\label{limitcurve}
Let $x,y$ be vertices in $V_m$, $\{g_n\}_{n\geq m}$ a sequence of simple paths joining $x$ with $y$, $g_n\in \cp_S(V_n,E_n)$, such that $\Delta_k(g_n)=g_k$, for $m\leq k\leq n$, and assume $\ell(g_n)$ is bounded. Then there exists a rectifiable curve $\g$ in $K$ such that $\Delta_n(\g)=g_n$, $n\geq m$, and $\ell(\g)=\lim_n\ell(g_n)$.
\end{Lemma}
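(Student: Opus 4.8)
The plan is to build $\g$ as a limit of the embedded broken-line paths associated with the $g_n$, reparametrised consistently so that the reparametrisations nest. First I would recall the setup of Lemma \ref{discr-curve}: each simple path $g_n = (x_0^{(n)}, \dots, x_{q_n}^{(n)})$ in $\cp_S(V_n,E_n)$ already comes with a natural immersion in $K$ — the concatenation of the edges $w_\s(e)$ realised as geodesic segments inside the corresponding $n$-cells — and the compatibility condition $\Delta_k(g_n)=g_k$ for $k\leq n$ says precisely that refining $g_n$ to level $k$ gives back $g_k$. Because of the nesting property (Definition \ref{def:nested}(4)) and the open set condition, the trace of $g_{n+1}$ in $K$ is a refinement of that of $g_n$: it passes through the same level-$n$ vertices in the same order and between consecutive such vertices stays in the single $n$-cell determined by $g_n$. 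So the point sets form a coherent nested family, and one should be able to choose parametrisations $\g_n:[0,1]\to K$ of the $g_n$ such that $\g_{n+1}$ agrees with $\g_n$ on the finite parameter set $t_0^{(n)} < \dots < t_{q_n}^{(n)}$ corresponding to the level-$n$ vertices, and such that these parameter sets increase with $n$. The diameters of $n$-cells tend to $0$ (they are $\leq \l^n \operatorname{diam} K$), so on the dense set $G:=\bigcup_n \{t_j^{(n)}\}$ the maps $\g_n$ converge pointwise, and the limit extends uniquely to a continuous curve $\g:[0,1]\to K$ because on each parameter interval between consecutive level-$n$ points all the $\g_m$, $m\geq n$, take values in one fixed $n$-cell.

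The second step is to identify $\Delta_n(\g)$. By construction $\g(t_j^{(n)}) = x_j^{(n)}\in V_n$, these are distinct, and between consecutive ones $\g$ stays inside the open $n$-cell $\dot C$ supplied by Lemma \ref{discr-curve}'s argument — here one uses the open set condition and connectedness exactly as in that lemma, together with the fact that the only level-$n$ vertices $\g$ meets in $[t_{j-1}^{(n)}, t_j^{(n)}]$ are the two endpoints (any other would have appeared already as a level-$n$ vertex of some $g_m$, $m\geq n$, hence of $g_n$, contradicting the spacing of the $t_j^{(n)}$). Hence the discretisation map returns $(x_0^{(n)}, \dots, x_{q_n}^{(n)}) = g_n$, i.e. $\Delta_n(\g)=g_n$.

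For the length statement, one inequality is free: by Lemma \ref{dn<dn+1}(1)-style monotonicity, or directly since $\Delta_n(\g)=g_n$, Lemma \ref{discr-curve} gives $\ell(g_n)=\ell(\Delta_n(\g))\leq \ell(\g)$, so $\lim_n \ell(g_n)\leq \ell(\g)$ (the limit exists because $\ell(g_n)=\ell(\Delta_n(\g))$ is nondecreasing in $n$ by the refinement property — refining a path to the next level can only increase its length, since each level-$n$ edge is replaced inside its cell by a chain of level-$(n+1)$ edges with the same endpoints — and bounded by hypothesis). For the reverse inequality I would invoke Lemma \ref{dense-rect} with the dense set $G$: any finite sequence $s_0 < \dots < s_r$ in $G$ lies in some single parameter set $\{t_j^{(n)}\}$ for $n$ large enough, and the corresponding sum $\sum |\g(s_i)-\g(s_{i-1})|$ of Euclidean chord lengths is at most $\ell(g_n)$ (the chord between two vertices of $g_n$ is no longer than the portion of the broken line of $g_n$ between them). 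Taking the supremum over such sequences, $\ell_G(\g)\leq \lim_n \ell(g_n)<\infty$, and Lemma \ref{dense-rect} then gives $\ell(\g)=\ell_G(\g)\leq \lim_n\ell(g_n)$; in particular $\g$ is rectifiable. Combining the two inequalities yields $\ell(\g)=\lim_n \ell(g_n)$.

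The main obstacle I anticipate is the bookkeeping in the first step: producing parametrisations $\g_n$ whose defining parameter sets genuinely nest, $\{t_j^{(n)}\}\subset\{t_j^{(n+1)}\}$, and verifying that no "new" level-$n$ vertex sneaks into a subinterval at a later stage — this is where one must use the nesting property and the structure of $\Delta_k(g_n)=g_k$ carefully, rather than the soft compactness arguments, which are routine. Once the coherent parametrisation is in hand, the convergence, the identification of $\Delta_n(\g)$, and the length computation are all straightforward applications of the preceding lemmas.
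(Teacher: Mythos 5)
Your overall strategy is the same as the paper's (define the curve on a dense parameter set as a limit of the $g_n$, identify $\Delta_n(\g)$, and compute the length via Lemma \ref{dense-rect}), but the step you defer as ``bookkeeping'' --- producing nested parametrisations whose parameter sets are dense in the limit --- is in fact the heart of the proof, and you have not supplied it. The paper resolves it by parametrising each vertex $v$ of the limiting vertex set $V_\infty(g)=\cup_n V_n(g)$ by the \emph{limiting arc length} $R(v)=\lim_n\ell\big(T_x^v(g_n)\big)$, where $T_x^v(g_n)$ is the initial sub-path of $g_n$ up to $v$; this limit exists because these lengths are increasing in $n$ and bounded, the map $R$ is automatically order-preserving and injective, it trivially nests across levels (no choices to reconcile), and it makes $\g:=R^{-1}$ $1$-Lipschitz on its range $G$, since $|\g(t)-\g(s)|\leq\ell\big(T_{\g(s)}^{\g(t)}(g_n)\big)\leq t-s$. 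The density of $G$ in $[0,c]$, $c=R(y)$, is then a genuine argument, not a formality: a gap $(a,b)$ in the range would force $\ell(g_n)\leq a+\ell_{max}\l^n+(c-b)$ by looking at the single level-$n$ edge straddling the gap, and letting $n\to\infty$ gives $a\geq b$. With your generic choice of nested parameters in $[0,1]$ nothing guarantees that $\cup_n\{t_j^{(n)}\}$ is dense, and without density Lemma \ref{dense-rect} does not apply and your upper bound $\ell(\g)=\ell_G(\g)\leq\lim_n\ell(g_n)$ collapses. (The natural fix, normalised arc length at each level, does not nest, since the length of the initial sub-path up to a fixed vertex keeps changing with $n$.)

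A secondary point: your appeal to a ``natural immersion'' of $g_n$ in $K$ as a concatenation of geodesic segments inside $n$-cells is not well-founded. The elements of $E_n$ are ordered pairs of vertices with $\ell(e)$ the Euclidean distance; the straight segment between the endpoints need not lie in $K$, and geodesics in $K$ are precisely what this section is in the process of constructing (their lengths would in any case be $\dg$, not $\ell(e)$). This does not sink your argument, because in the end only the vertex values $\g(t_j^{(n)})=x_j^{(n)}\in V_n\subset K$ matter and continuity of the extension follows from the shrinking cell diameters, but the proof should be phrased entirely in terms of the combinatorial paths and their vertex sequences, as the paper does. The remaining ingredients you list (monotonicity and boundedness of $\ell(g_n)$, the chord estimate $\sum_i|\g(s_i)-\g(s_{i-1})|\leq\ell(g_n)$, and the identification $\Delta_n(\g)=g_n$) are correct and match the paper.
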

\begin{proof}
Let us denote by $V_n(g)$ the sequence of vertices in $V_n$ determined by $g_n$, $n\geq m$. Each $V_n(g)$ is a totally ordered set, and the family of sets $V_n(g)$ is increasing in such a way that the order is preserved, so $V_\infty(g):=\cup_n V_n(g)$ is also a totally ordered set. 
For $m\leq k\leq n$, $u$ preceding $v$ in $V_k(g)$, let us denote by $T_u^v(g_n)$ the sub-path of $g_n$ from $u$ to $v$. Let us note that, for any $u,v$, the sequence $n\to\ell\big(T_u^v(g_n)\big)$ is increasing and bounded.
We now set $R(v)=\lim_n\ell\big(T_x^v(g_n)\big)$, $v\in V_\infty(g)$, denote by $G$ its range, set $c=R(y)$, and observe that $R$ is order-preserving and injective. Moreover, $G$ is dense in $[0,c]$: if not, let $0<a<b<c$ be such that $(a,b)$ is an open interval in $[0,c]\setminus G$, and, for any $n\geq m$, let $u_n$ be the last vertex in $V_n(g)$ such that $R(u_n)\leq a$,  $v_n$ the vertex following $u_n$ in $V_n(g)$, $e_n\in E_n$ the edge joining $u_n$ with $v_n$. Since $R(v_n)>a$, and $R(v_n)\not\in(a,b)$ by hypothesis, $R(v_n)\geq b$. Then
$$
\ell\big((g_n)_x^{y}\big)=\ell\big(T_x^{u_n}(g_n)\big)+\ell(e_n)+\ell\big(T_{v_n}^y(g_n)\big)\leq a+\ell_{max} \l^n+(c-b),
$$
where $\ell_{max}$ denotes the maximum length of an edge in $E_0$. Passing to the limit on $n$ we get $c\leq a+c-b$, namely $a\geq b$, against the hypotheses. Now set $\g:G\to K$, $\g(t)=R^{-1}(t)$, and observe that, given $s<t\in G$, $n$ such that $\g(t),\g(s)\in V_n(g)$, 
$$
|\g(t)-\g(s)|\leq\ell\big(T_{\g(s)}^{\g(t)}(g_n)   \big) \leq t-s,
$$
namely $\g$ is Lipschitz on $G$, therefore it extends to a continuous function on $[0,c]$ with values in $K$. By construction, $\Delta_n(\g)=g_n$, $n\geq m$. By Lemma \ref{dense-rect}, $\g$ is rectifiable. Finally, let $\cx$ be the family of finite subsets of $G$, ordered by inclusion, and, for any $X=\{t_0<t_1<\ldots<t_n\}\in\cx$, set $\ell_X(\g):=\sum_{j=1}^n |\g(t_j)-\g(t_{j-1}|$. Since $X \mapsto \ell_X(\g)$ is increasing, $\lim_{X\in\cx} \ell_X(\g) = \ell_G(\g)$. For any $n\geq m$, let $G_n:=R(V_n(g))$, so that $G_n\in\cx$, and $\ell_{G_n}(\g) = \ell(g_n)$. Since $\{ G_n \}_{n\geq m}$ is cofinal in $\cx$, we get $\lim_n\ell(g_n) = \ell_G(\g) = \ell(\g)$.
%
\end{proof}

\begin{Thm}\label{dgeo=limdn}
For any  $x,y\in V$, $\lim_n d_n(x,y)=\dg(x,y)$. Moreover, there exists a geodesic curve joining $x$ with $y$.
\end{Thm}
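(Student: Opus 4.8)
The plan is to combine the monotone convergence of $d_n(x,y)$, which is already in hand, with a compactness argument that extracts a \emph{compatible} family of discrete geodesics, and then feed that family into Lemma \ref{limitcurve}. First I would fix $x,y\in V$, say $x,y\in V_m$ with $x\neq y$ (the case $x=y$ being trivial, with the constant curve as geodesic). By Lemma \ref{dn<dn+1}, the sequence $n\mapsto d_n(x,y)$ is nondecreasing and bounded above by $\dg(x,y)$, so $L:=\lim_n d_n(x,y)$ exists and $L\leq\dg(x,y)$; hence it suffices to produce a rectifiable curve in $K$ from $x$ to $y$ of length $L$, since this forces $\dg(x,y)\leq L$ and exhibits the curve as a geodesic. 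For each $n\geq m$ I would pick a shortest path $\g_n\in\cp(V_n,E_n)$ from $x$ to $y$; because all edge lengths are positive, $\g_n$ is simple, i.e. $\g_n\in\cp_S(V_n,E_n)$, and $\ell(\g_n)=d_n(x,y)\leq L$.

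The next ingredient is a combinatorial projection. For $m\leq k\leq n$ and any simple path $g\in\cp_S(V_n,E_n)$ from $x$ to $y$, I claim the subsequence of the vertices of $g$ lying in $V_k$ consists of vertices that are \emph{consecutively} joined by edges of $E_k$: the sub-path of $g$ between two successive $V_k$-vertices stays inside a single $k$-cell, because a vertex of $V_n$ belonging to two distinct $k$-cells would, by the nesting property, lie in $V_k\cap V_k$; since the graph of a cell is complete, this produces the desired $k$-edge. This gives a well-defined map $\Delta_k\colon\cp_S(V_n,E_n)\to\cp_S(V_k,E_k)$ which preserves the endpoints $x,y$, which is length-nonincreasing by the triangle inequality in $\br^n$ (each $k$-edge is no longer than the sub-path it replaces), which is transitive, $\Delta_k\circ\Delta_n=\Delta_k$, and which is consistent with the curve-to-path operation of Lemma \ref{discr-curve}.

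Then I would run a diagonal argument: for each fixed $k\geq m$ the set $\cp_S(V_k,E_k)$ of simple paths from $x$ to $y$ is finite, so the values $\Delta_k(\g_n)$, $n\geq k$, are finitely many; a diagonal extraction yields a subsequence $(\g_{n_j})_j$ such that for every $k\geq m$ the path $\Delta_k(\g_{n_j})$ is eventually equal to some fixed $g_k\in\cp_S(V_k,E_k)$. The family $(g_n)_{n\geq m}$ meets the hypotheses of Lemma \ref{limitcurve}: each $g_n$ joins $x$ to $y$; for $m\leq k\leq n$ and $j$ large one has $\Delta_k(g_n)=\Delta_k(\Delta_n(\g_{n_j}))=\Delta_k(\g_{n_j})=g_k$; and $\ell(g_n)=\ell(\Delta_n(\g_{n_j}))\leq\ell(\g_{n_j})\leq L$, so the lengths are bounded. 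Lemma \ref{limitcurve} then provides a rectifiable curve $\g$ in $K$ with $\Delta_n(\g)=g_n$ and $\ell(\g)=\lim_n\ell(g_n)\leq L$. Since $\g$ joins $x$ to $y$ we get $\dg(x,y)\leq\ell(\g)\leq L\leq\dg(x,y)$, so $\ell(\g)=L=\dg(x,y)$, which proves both $\lim_n d_n(x,y)=\dg(x,y)$ and the existence of a geodesic curve.

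The part I expect to require the most care is the bookkeeping around the combinatorial projection $\Delta_k$: verifying that successive $V_k$-vertices of a simple $n$-path are actually linked by a $k$-edge (this is where the nesting property and the completeness of cell graphs enter), that $\Delta_k$ is length-nonincreasing and transitive, and that it is compatible with the curve operation $\Delta_n$ appearing in Lemma \ref{limitcurve}; one must also make sure the diagonal extraction interacts correctly with these maps. Once these essentially routine points are settled, the theorem is an assembly of Lemmas \ref{dn<dn+1} and \ref{limitcurve}.
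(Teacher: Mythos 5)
Your proposal is correct and follows essentially the same route as the paper: choose discrete geodesics $\g_n$, use finiteness of $\cp_S(V_k,E_k)$ to extract (by a diagonal/pigeonhole induction) a compatible family $g_k$ with $\Delta_k(g_n)=g_k$, feed it into Lemma \ref{limitcurve}, and squeeze $\dg(x,y)\leq\ell(\g)\leq\lim_n d_n(x,y)\leq\dg(x,y)$. The only cosmetic difference is that you define the projection $\Delta_k$ combinatorially on simple paths, whereas the paper applies Lemma \ref{discr-curve} to the discrete paths viewed as curves; the content is identical.
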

\begin{proof}
Assume $x,y\in V_m$. For any $n\geq m$ choose a geodesic path $\g_n\in\cp(V_n,E_n)$ connecting $x$ with $y$; being geodesic, $\g_n$ is simple. 

We now construct by induction sequences $g_n\in\cp_S(V_n,E_n)$, $\O_n\subset\bn$, $n\geq m$ such that
\begin{enumerate}
\item $\Delta_k(g_n)=g_k, m\leq k\leq n$,
\item $\O_n$ is infinite, contained in $[n,\infty)$, and decreasing w.r.t. the inclusion, and for any $p\in\O_n$, $\Delta_n(\g_p)=g_n$.
\end{enumerate} 
Base case: consider $\{\Delta_m(\g_n),n\geq m\}\subset\cp_S(V_m,E_m)$. Since the latter is a finite set, there exists $g_m\in\cp_S(V_m,E_m)$ such that the set $\O_m:=\{n\geq m:\Delta_m(\g_n)=g_m\}$ is infinite.
\\
Inductive step: if we have $g_k\in\cp_S(V_k,E_k)$, $\O_k\subset\bn$, $m\leq k\leq n$, satisfying $(1)$ and $(2)$, construct $g_{n+1}$ and $\O_{n+1}$ as follows.
Consider $\{\Delta_{n+1}(\g_p) : p\in\O_n,p\geq n+1\}\subset \cp_S(V_{n+1},E_{n+1})$. Since the latter is a finite set, there exists $g_{n+1}\in\cp_S(V_{n+1},E_{n+1})$ such that the set $\O_{n+1}:=\{p\in\O_n:p\geq n+1, \Delta_{n+1}(\g_p)=g_{n+1}\}$ is infinite.
Clearly, for $m\leq k\leq n+1$, $p\in\O_{n+1}\subset\O_k$,
$\Delta_k(g_{n+1}) = \Delta_k(\Delta_{n+1}(\g_p)) = \Delta_k(\g_p)=g_k$.

Let us now observe that, setting $n_k=\min\O_k$, the sequence $n_k$ is increasing and tends to $\infty$, and $\ell(g_k)=\ell(\Delta_k(\g_{n_k}))\leq\ell(\g_{n_k})$. 
Moreover, by Lemma \ref{dn<dn+1} the sequence $d_n(x,y)=\ell(\g_n)$ is increasing and bounded, hence has a finite limit, and, by Lemma \ref{limitcurve}, the sequence $g_n$ gives rise to a rectifiable curve $\g$ joining $x$ with $y$.
As a consequence,
$$
\ell(\g)=\lim_k\ell(g_k)\leq\lim_k\ell(\g_{n_k})=\lim_kd_{n_k}(x,y)=\lim_nd_n(x,y),
$$
hence $\dg(x,y)\leq\ell(\g)\leq\lim_nd_n(x,y)\leq\dg(x,y)$, namely $\dg(x,y)=\lim_nd_n(x,y)$ and $\g$ is geodesic.
\end{proof}

\subsection{Intermediate triples}

\begin{Dfn}
Let us set $E_{n,\infty}=\cup_{k\geq n}E_k$, and observe that the graph $(V, E_{n,\infty})$ is connected for any $n\in\bn$. Then, for $x,y\in V$, we pose 
$$
d_{n,\infty}(x,y)=\inf\{\ell(\g) : \g\text{ finite path in }E_{n,\infty} \text{ connecting } x \text{ with } y\}
$$
and consider on $V$ the triple $\ct_{n,\infty}=\oplus_{|\s|\geq n}\ct_\s$, with the  seminorm 
$$
L_{D_{n,\infty}}(f)=\sup_{e\in E_{n,\infty}}\frac{|\langle\partial f,e\rangle|}{\ell(e)}.
$$
Let us observe that $L_{0,\infty}=L_D$ is the seminorm used by Connes, namely it induces the Connes' distance on $K$.

Given $x,y\in V$, define  $k(x,y)=\min\{j\in \bn:x,y\in V_j\}$.
\end{Dfn}

\begin{Lemma}\label{dninfinity=dn}
For $x,y\in V$, $n\geq k(x,y)$,
$d_{n,\infty}(x,y)=d_n(x,y)$. In particular, any two vertices $x,y\in V$ are connected by a geodesic path in $\cp(V,E_{n,\infty})$.
\end{Lemma}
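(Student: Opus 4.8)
The plan is to prove the two inequalities separately. One direction is immediate: since $E_n\subseteq E_{n,\infty}$, every finite $E_n$-path is a path in $\cp(V,E_{n,\infty})$, so $d_{n,\infty}(x,y)\leq d_n(x,y)$.

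For the reverse inequality I would take an arbitrary finite path $\gamma\in\cp(V,E_{n,\infty})$ joining $x$ with $y$ — note $x,y\in V_n$, since $n\geq k(x,y)$ — and show $\ell(\gamma)\geq d_n(x,y)$. The idea is to cut $\gamma$ at the finitely many positions at which it visits a vertex of $V_n$: this writes $\gamma$ as a concatenation $\gamma_1\ast\cdots\ast\gamma_s$ of sub-paths, where $\gamma_j$ joins $u_{j-1}$ with $u_j$, all the $u_j$ lie in $V_n$ (with $u_0=x$, $u_s=y$), and no internal vertex of $\gamma_j$ lies in $V_n$. The key sub-claim is that each $\gamma_j$ is contained in a single $n$-cell. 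Indeed, if $\gamma_j$ is a single edge of level $n$ there is nothing to prove; otherwise every edge of $\gamma_j$ has level strictly greater than $n$, because an edge of $E_n$ has both endpoints in $V_n$ and would create an internal vertex of $\gamma_j$ in $V_n$. Now an edge of level $k>n$ lies in a $k$-cell, which is contained in a unique $n$-cell; moreover, by the nesting property, two distinct $n$-cells meet only in a subset of $w_\sigma(V_0)\cap w_{\sigma'}(V_0)\subseteq V_n$, so each point of $K\setminus V_n$ — in particular every internal vertex of $\gamma_j$ — lies in exactly one $n$-cell. Chaining this along the consecutive edges of $\gamma_j$, which share vertices, forces all the $n$-cells involved to coincide with one cell $w_\sigma(K)$, whence $u_{j-1},u_j\in w_\sigma(K)\cap V_n=w_\sigma(V_0)$.

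Granting the sub-claim, $(u_{j-1},u_j)$ is an edge of $E_n$ of length $\|u_{j-1}-u_j\|$; since $\gamma_j$ is a polygonal arc in $\br^n$ its length is at least the Euclidean distance between its endpoints, so $\ell(\gamma_j)\geq\|u_{j-1}-u_j\|\geq d_n(u_{j-1},u_j)$ (this last inequality being trivial when $u_{j-1}=u_j$). Summing over $j$ and using the triangle inequality for $d_n$ gives $\ell(\gamma)=\sum_j\ell(\gamma_j)\geq\sum_j d_n(u_{j-1},u_j)\geq d_n(x,y)$, and taking the infimum over $\gamma$ yields $d_{n,\infty}(x,y)\geq d_n(x,y)$; hence the two distances coincide. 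For the last assertion, since $(V_n,E_n)$ is a finite graph and $d_n(x,y)=d_{n,\infty}(x,y)<\infty$, there is a shortest $E_n$-path between $x$ and $y$; viewed as an element of $\cp(V,E_{n,\infty})$ it realises $d_{n,\infty}(x,y)$ and is therefore a geodesic. I expect the only delicate point to be the combinatorial bookkeeping in the sub-claim — namely, ruling out that successive level-$>n$ edges of $\gamma_j$ jump between different $n$-cells — which is precisely where the nesting property is used; the rest is elementary.
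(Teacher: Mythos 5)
Your proof of the equality $d_{n,\infty}(x,y)=d_n(x,y)$ for $n\geq k(x,y)$ is correct, and in substance it is the same mechanism the paper uses: the paper projects a competitor path $\g\in\cp(V,E_{n,\infty})$ to the path $\Delta_n(\g)\in\cp(V_n,E_n)$ of Lemma \ref{discr-curve} and observes that it is shorter, and this rests on exactly the facts you re-derive by hand --- the nesting property confines each excursion between consecutive visits to $V_n$ to a single $n$-cell, completeness of the cell graph supplies the replacing $E_n$-edge, and the triangle inequality in $\br^N$ gives the length comparison. Your discrete, self-contained version of this step is if anything slightly more careful, since Lemma \ref{discr-curve} is stated for simple continuous curves and its application to (possibly non-simple) edge paths is left implicit in the paper's proof.

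There is, however, a gap in your treatment of the final assertion. As the paper states it and later uses it (see Lemma \ref{Ln=LDn}, where $L_{n,\infty}$ involves a supremum over \emph{all} pairs $x,y\in V$ for a fixed $n$, e.g.\ $n=0$), the claim is that \emph{any} two vertices of $V$ are joined by a geodesic in $\cp(V,E_{n,\infty})$, including pairs with $k:=k(x,y)>n$. For such pairs the displayed equality is not available and $x,y$ need not lie in $V_n$, so your argument --- which produces the geodesic as a shortest path in the finite graph $(V_n,E_n)$, and whose cutting procedure requires the first and last cut points to be $x$ and $y$ themselves --- does not apply. The paper handles this case by projecting any competitor $\g'\in\cp(V,E_{n,\infty})$ via $\Delta_k$ into $\cp(V_k,E_k)\subset\cp(V_k,E_{n,k})$, where $E_{n,k}=\cup_{n\leq p\leq k}E_p$ is a finite graph containing $x$ and $y$, and taking the shortest path there; you would need to add this (or run your cutting argument at level $k$ instead of level $n$) to cover the remaining case.
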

\begin{proof}
Recall that, since $x,y\in V_n$,
$$
d_{n}(x,y)=\inf\{\ell(\g) : \g\text{ finite path in }E_{n} \text{ connecting } x \text{ with } y\},
$$
therefore $d_{n,\infty}(x,y)\leq d_{n}(x,y)$. Conversely, given a path $\g\in\cp(V,E_{n,\infty})$, the path $\Delta_n(\g)$ belongs to $\cp(V_n,E_n)$ and is shorter then $\g$. Taking the infimum over  $\g$ we get the first statement. As for the second, if $k(x,y)\leq n$ take the geodesic path in $(V_n,E_n)$. If $k=k(x,y)>n$, take the geodesic path $\gamma \in \cp(V_k,E_{n,k})$, where $E_{n,k}=\cup_{n\leq p\leq k}E_p$. Given any other path $\gamma'\in \cp(V,E_{n,\infty})$ we may consider the path $\Delta_k(\g')$, which is shorter and belongs to $\cp(V_k,E_k)\subset\cp(V_k,E_{n,k})$, hence $\ell(\g)\leq\ell(\Delta_k(\g'))\leq\ell(\g')$. The thesis follows.
\end{proof}

Let us now consider the Lipschitz seminorm $L_{n,\infty}$ on functions on $V$ associated with the distance $d_{n,\infty}(x,y)$,
$$
L_{n,\infty}(f)=\sup_{x, y\in V}\frac{|f(x)-f(y)|}{d_{n,\infty}(x,y)}.
$$

\begin{Lemma}\label{Ln=LDn}
$\displaystyle L_{n,\infty}(f)=L_{D_{n,\infty}}(f)$, hence the noncommutative distance induced by $L_{D_{n,\infty}}(f)$ on $V$ coincides with $d_{n,\infty}$. In particular, setting $n=0$, we obtain that the Connes' distance on $V$ coincides with $d_{0,\infty}$.
\end{Lemma}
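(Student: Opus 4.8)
The plan is to reproduce the argument of Lemma \ref{LipNorm}, now carried out on the infinite graph $(V,E_{n,\infty})$ instead of the finite graph $(V_n,E_n)$. First I would prove the easy inequality $L_{D_{n,\infty}}(f)\le L_{n,\infty}(f)$: for every edge $e\in E_{n,\infty}$ the single edge $e$ is itself a path in $\cp(V,E_{n,\infty})$ joining $e^+$ with $e^-$, so $d_{n,\infty}(e^+,e^-)\le\ell(e)$, whence
\[
\frac{|\langle\partial f,e\rangle|}{\ell(e)}=\frac{|f(e^+)-f(e^-)|}{\ell(e)}\le\frac{|f(e^+)-f(e^-)|}{d_{n,\infty}(e^+,e^-)}\le L_{n,\infty}(f),
\]
and taking the supremum over $e\in E_{n,\infty}$ gives the claim.

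For the reverse inequality, fix $x,y\in V$. By the second assertion of Lemma \ref{dninfinity=dn} there is a geodesic path $\g=(e_1,\dots,e_k)$ in $\cp(V,E_{n,\infty})$ joining $x$ with $y$, so $d_{n,\infty}(x,y)=\sum_{j=1}^k\ell(e_j)$. Using the triangle inequality $|f(x)-f(y)|\le\sum_{j=1}^k|\langle\partial f,e_j\rangle|$ along the path, I would estimate
\[
\frac{|f(x)-f(y)|}{d_{n,\infty}(x,y)}\le\sum_{j=1}^k\frac{|\langle\partial f,e_j\rangle|}{d_{n,\infty}(x,y)}\le\max_{1\le j\le k}\frac{|\langle\partial f,e_j\rangle|}{\ell(e_j)}\cdot\frac{\sum_{j=1}^k\ell(e_j)}{d_{n,\infty}(x,y)}=\max_{1\le j\le k}\frac{|\langle\partial f,e_j\rangle|}{\ell(e_j)}\le L_{D_{n,\infty}}(f),
\]
and then take the supremum over $x\ne y$ in $V$ to obtain $L_{n,\infty}(f)\le L_{D_{n,\infty}}(f)$. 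Combining the two inequalities yields $L_{n,\infty}=L_{D_{n,\infty}}$.

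The statement about distances then follows immediately: the noncommutative distance induced by $L_{D_{n,\infty}}$ is $\sup\{|f(x)-f(y)|:L_{D_{n,\infty}}(f)\le1\}=\sup\{|f(x)-f(y)|:L_{n,\infty}(f)\le1\}$, and the distance on points induced by the Lipschitz seminorm of a metric coincides with that metric — the same standard fact already invoked at the end of the proof of Lemma \ref{LipNorm} — so this distance equals $d_{n,\infty}$. Specializing to $n=0$ and recalling $L_{0,\infty}=L_D$ recovers Connes' distance on $V$ as $d_{0,\infty}$.

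I do not expect a genuine obstacle here; the one point that needs care is that $(V,E_{n,\infty})$ is an infinite graph, so the minimizing path used in the second step is not automatically attained. This is precisely what the second assertion of Lemma \ref{dninfinity=dn} supplies, reducing the existence of the geodesic path to geodesics in the finite graphs $(V_k,E_{n,k})$, after which the computation above is entirely routine.
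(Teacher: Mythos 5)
Your proposal is correct and follows exactly the paper's own route: the paper proves this lemma by observing that, thanks to the existence of geodesic paths guaranteed by Lemma \ref{dninfinity=dn}, the proof of Lemma \ref{LipNorm} applies verbatim to the graph $(V,E_{n,\infty})$. You have simply written out that verbatim argument in full, including the correct identification of the attainment of the minimizing path as the only point requiring care.
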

\begin{proof}
Since, by Lemma \ref{dninfinity=dn}, any two vertices $x,y\in V$ are connected by a geodesic path, the proof of Lemma \ref{LipNorm} applies verbatim.
\end{proof}

\subsection{Conclusion}

Let us now observe that, for any Lipschitz function $f$, the operator $|[D,f]|$ is completely diagonalizable, with eigenvalues 
$$
\Big \{ \frac{|\langle \partial f,e\rangle|}{\ell(e)}:e\in E_{0,\infty} \Big \} = \bigcup_{n\in\bn} \Lambda_n,
\quad \Lambda_n = \Big \{ \frac{|\langle \partial f,e\rangle|}{\ell(e)}:e\in E_n \Big \}.
$$
Therefore the essential spectrum of $|[D,f]|$ is given by the limit points of the sequences
$\big\{\{a_n\}:a_n\in\Lambda_n\big\}$, the norm in the Calkin algebra is given by the maximum of the essential spectrum, hence
$$
L_{\text{ess}}(f)=\|p([D,f])\|=\inf_n\sup_{e\in E_{n,\infty}}\frac{|\langle \partial f,e\rangle|}{\ell(e)}=\inf_n L_{n,\infty}(f).
$$

\begin{Prop}\label{ess=geoOnV}
For any $x,y\in V$, $\dess(x,y)=\dg(x,y)$, and, if $\Less(f)=1$ then $|f(x)-f(y)|\leq\dess(x,y)$.
\end{Prop}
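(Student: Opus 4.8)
The plan is to prove the two inequalities $\dess(x,y)\le\dg(x,y)$ and $\dess(x,y)\ge\dg(x,y)$ separately, using the identity $\Less(f)=\inf_n L_{n,\infty}(f)$ just established, together with three facts already available. First, since $E_{n,\infty}\supseteq E_{n+1,\infty}$, the sequence $L_{n,\infty}(f)$ is non-increasing, so $\Less(f)=\lim_n L_{n,\infty}(f)$. Second, by Lemma \ref{Ln=LDn} the distance $d_{n,\infty}$ on $V$ is exactly the one induced by the seminorm $L_{n,\infty}$, hence $|f(u)-f(v)|\le L_{n,\infty}(f)\,d_{n,\infty}(u,v)$ for every function $f$ on $V$, and by Lemma \ref{dninfinity=dn} one has $d_{n,\infty}(x,y)=d_n(x,y)$ as soon as $n\ge k(x,y)$. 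Third, by Lemma \ref{dn<dn+1} and Theorem \ref{dgeo=limdn}, $d_n(x,y)$ increases to $\dg(x,y)$.

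For the upper bound, and for the second assertion, let $f$ be any function on $V$ with $\Less(f)\le1$. Then for $n\ge k(x,y)$ one has $|f(x)-f(y)|\le L_{n,\infty}(f)\,d_{n,\infty}(x,y)=L_{n,\infty}(f)\,d_n(x,y)$; since $L_{n,\infty}(f)\to\Less(f)\le1$ and $d_n(x,y)\to\dg(x,y)$, letting $n\to\infty$ gives $|f(x)-f(y)|\le\dg(x,y)$. Applied to $f\in\ca$ with $\Less(f)\le1$ this yields $\dess(x,y)\le\dg(x,y)$; and, once the reverse inequality is known, it gives, for every $f$ with $\Less(f)=1$, the bound $|f(x)-f(y)|\le\dg(x,y)=\dess(x,y)$, which is the second assertion.

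For the lower bound, fix $x$ and, for each $n\ge k(x,y)$, consider on $V$ the function $f_n(z):=d_{n,\infty}(x,z)$. The triangle inequality for $d_{n,\infty}$ gives $L_{n,\infty}(f_n)\le1$, hence $\Less(f_n)\le1$, and $|f_n(x)-f_n(y)|=d_{n,\infty}(x,y)=d_n(x,y)$. It remains to check that $f_n\in\ca$. The commutator is bounded: writing $\|[D,f_n]\|=\sup_{e\in E_{0,\infty}}\frac{|\langle\partial f_n,e\rangle|}{\ell(e)}$, for an edge $e\in E_j$ with $j\ge n$ the edge is itself an $E_{n,\infty}$-path of length $\ell(e)$, so $|\langle\partial f_n,e\rangle|\le d_{n,\infty}(e^+,e^-)\le\ell(e)$, while the finitely many remaining edges lie in the finite graph on the finite vertex set $V_{n-1}$ and contribute a finite maximum. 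And $f_n$ extends continuously to $K$: by Lemma \ref{discr-curve} one has $|f_n(u)-f_n(v)|\le d_{n,\infty}(u,v)\le\dg(u,v)$, and $\dg$ is topologically equivalent to the Euclidean metric on a finitely arcwise connected nested fractal, so $f_n|_V$ is uniformly continuous and extends to an element of $\cc(K)$. Thus $f_n$ is admissible in \eqref{essentialdistance}, so $\dess(x,y)\ge|f_n(x)-f_n(y)|=d_n(x,y)$; taking the supremum over $n$ gives $\dess(x,y)\ge\dg(x,y)$, and hence $\dess(x,y)=\dg(x,y)$.

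I expect the main obstacle to be the verification $f_n\in\ca$, and within it the continuous extension of $f_n$ to $K$: this rests on the compatibility of the Euclidean geodesic distance with the Euclidean topology, which uses the open set condition and the nesting property (non-adjacent cells of a given level are separated by a positive Euclidean distance) together with finite arcwise connectedness ensuring $\dg<\infty$. By contrast, the bounded-commutator estimate is elementary once one observes that edges of level $\ge n$ are $E_{n,\infty}$-paths of their own length. Note finally that the second assertion of the Proposition does not require this extension step, since its proof stays entirely at the level of the seminorms $L_{n,\infty}$ and the finite graphs $(V_n,E_n)$.
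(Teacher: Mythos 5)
Your proposal is correct and follows essentially the same route as the paper's proof: both arguments reduce everything to the chain $\Less(f)=\inf_n L_{n,\infty}(f)$, $d_{n,\infty}(x,y)=d_n(x,y)$ for $n\geq k(x,y)$ (Lemma \ref{dninfinity=dn}), and $d_n(x,y)\to\dg(x,y)$ (Theorem \ref{dgeo=limdn}), so that the claim becomes $\dess(x,y)=\sup_n d_{n,\infty}(x,y)$. The paper packages this as a single interchange of suprema and disposes of the admissibility of the extremal functions with the one-line remark that ``the request $f\in\cc(K)$ plays no role when $x,y\in V$''; you instead split the identity into two inequalities and, for the lower bound, exhibit the extremizers $f_n=d_{n,\infty}(x,\cdot)$ and verify that they lie in $\ca$. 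That verification is a genuine improvement in explicitness, since the inequality $\dess(x,y)\geq d_{n,\infty}(x,y)$ really does require a test function admissible in \eqref{essentialdistance}, i.e.\ continuous on $K$ with bounded commutator; your edge-by-edge bound for $\|[D,f_n]\|$ is the right one. The only step you leave as a sketch is the compatibility of $\dg$ with the Euclidean topology, needed to extend $f_n$ from $V$ to $\cc(K)$; this is provable along exactly the lines you indicate (two points at Euclidean distance below the minimal gap between disjoint $m$-cells lie in intersecting $m$-cells, whose geodesic diameter is bounded by a multiple of $\l^m$ once $\mathrm{diam}_{\dg}(K)<\infty$ is established from finite arcwise connectedness and self-similarity), and note that the paper's ``in passing'' remark implicitly relies on the same fact, so your treatment is, if anything, more complete than the original.
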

\begin{proof}
Since, for any $x,y\in V$, $d_{n,\infty}(x,y)$ is increasing with $n$,
\begin{align*}
\dess(x,y)&=\sup_{f\in\cc(K)}\frac{|f(x)-f(y)|}{\Less(f)}
=\sup_{f\in\cc(K)}\sup_n\frac{|f(x)-f(y)|}{L_{n,\infty}(f)}\\
&=\sup_n\sup_f\frac{|f(x)-f(y)|}{L_{n,\infty}(f)}
=\sup_nd_{n,\infty}(x,y)\\
&=\lim_nd_{n,\infty}(x,y)=\lim_nd_{n}(x,y)=\dg(x,y).
\end{align*}
We note in passing that the request ${f\in\cc(K)}$ plays no role when $x,y\in V$.
As for the second statement, $\Less(f)=1$ means $L_{n,\infty}(f)=1+\eps_n$, with $\eps_n\to0$. Then, for $n\geq k(x,y)$, $|f(x)-f(y)|\leq(1+\eps_n)d_n(x,y)$. Passing to the limit on $n$ one gets the thesis.
\end{proof}

\begin{proofof}{Theorem \ref{thm:modLip}}
Let $x\ne y\in K$, $\g$ a rectifiable curve joining $x$ with $y$ parametrised by arc-length, and set $c=\ell(\g)$. 
Let us note that the set $I_{\g}=\{t\in[0,c]:\g(t)\in V\}$ is dense in $[0,c]$. If not, we would get an interval $(a,b)\subset[0,c]$ such that, for any $n\in\bn$, $\g_o:=\{\g(t):t\in(a,b) \}\cap V_n=\emptyset$, namely, for any $n\in\bn$, $\g_o$ is contained in a single cell $C_n$ of level $n$, hence $\g_o\subset\cap_n C_n$, which consists of at most one point, that is the interval $(a,b)$ is empty.
Therefore, there exists a minimum $k$ such that the set $I_{\g,p}=\{t\in[0,c]:\g(t)\in V_p\}$,  $p\geq k$, is non-empty and closed, by continuity of $\g$. We set $s_p=\min I_{\g,p}$ and $t_p=\max I_{\g,p}$. 
By the density of $I_{\g}$, $s_p$ decreases to $0$ and $t_p$ increases to $c$, which implies that, setting $x_p=\g(s_p)$ and $y_p=\g(t_p)$, $d(x,x_p)\leq\ell(\g|_{[0,s_p]})=s_p\to0$, and $d(y,y_p)\leq\ell(\g|_{[t_p,c]})=c-t_p\to0$.
As a consequence, if $f\in\cc(K)$ and $\Less(f)\leq1$,
\begin{align*}
|f(x_k)-f(x)|&=\lim_{p\in\bn}|f(x_k)-f(x_p)|=|\sum_{p\geq k}f(x_p)-f(x_{p+1})|\\
&\leq\sum_{p\geq k} |f(x_p)-f(x_{p+1})|\leq \sum_{p\geq k} \dess(x_p,x_{p+1})\\
&\leq\sum_{p\geq k} \ell\big(\g|_{[s_{p+1},s_p]}\big)=\sum_{p\geq k} (s_p-s_{p+1})=s_p,
\end{align*}
where, in the first equality we used the continuity of $f$, in the second inequality the inequality in Proposition \ref{ess=geoOnV}, in the last inequality the identity $\dess=\dg$ on vertices of  Proposition \ref{ess=geoOnV}, and in the last but one equality the fact that $\g$ is parametrized by arc length. Reasoning in the same way, one gets $|f(y_k)-f(y)|\leq c-t_k$, hence
\begin{align*}
|f(x)-f(y)|&\leq |f(x_k)-f(y_k)|+|f(x_k)-f(x)|+|f(y_k)-f(y)|\\
&\leq |f(x_k)-f(y_k)|+s_k+(c-t_k);\\
|f(x_k)-f(y_k)|&\leq |f(x)-f(y)|+|f(x_k)-f(x)|+|f(y_k)-f(y)|\\
&\leq |f(x)-f(y)|+s_k+(c-t_k).
\end{align*}
As a consequence, 
$$
\dess(x,y)=\sup\{|f(x)-f(y)|:f\in\cc(K),\, \Less(f)\leq1\}\leq \dess(x_k,y_k)+s_k+(c-t_k),
$$
and, analogously, $\dess(x_k,y_k)\leq \dess(x,y)+s_k+(c-t_k)$, which implies 
$|\dess(x_k,y_k)-\dess(x,y)|\leq  s_k+(c-t_k)\to0$, $k\to\infty$, namely
$$
\dess(x,y)=\lim_k \dess(x_k,y_k).
$$
On the other hand, 
\begin{align*}
|\dg(x_k,y_k)-\dg(x,y)|&\leq \dg(x_k,x) + \dg(y_k,y) \\
&\leq \ell\big(\g|_{[0,s_k]}\big)+\ell\big(\g|_{[t_k,c]}\big)=s_k+(c-t_k),
\end{align*}
from which
$$
\dg(x,y)=\lim_k \dg(x_k,y_k)=\lim_k \dess(x_k,y_k),
$$
which immediately implies the thesis.
\end{proofof}


\begin{Cor}
Assume that  any edge of level $n$ is the union of edges of level $n+1$. Then the Connes' distance $d_D$ for the triple $\ct$ coincides with the geodesic distance, hence with the essential distance.
\end{Cor}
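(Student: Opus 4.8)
The plan is to reduce the corollary to Theorem \ref{thm:modLip} by showing that, under the refinement hypothesis, the essential Lip-norm $\Less$ coincides with the Connes Lip-norm $\|[D,\cdot]\|$, so that the two distance formulas \eqref{NCdistance} and \eqref{essentialdistance} produce the same distance on $K$; then Theorem \ref{thm:modLip} finishes the job.

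The first and only real step is to promote the hypothesis from a statement about single edges to a statement about the intermediate distances $d_{n,\infty}$. Since any edge of level $n$ is covered, with no loss of total length, by a finite chain of consecutive edges of level $n+1$ (the property exploited in the proof of Lemma \ref{dn<dn+1}(2)), any finite path in $(V,E_{n,\infty})$ can be converted into a finite path in $(V,E_{n+1,\infty})$ with the same endpoints and the same length, by replacing each of its finitely many level-$n$ edges with the covering chain. Hence $d_{n+1,\infty}\le d_{n,\infty}$ on $V\times V$; the opposite inequality is trivial because $E_{n+1,\infty}\subseteq E_{n,\infty}$, so paths are only fewer. Therefore $d_{n,\infty}=d_{0,\infty}$ for every $n$, and consequently the associated Lipschitz seminorms $L_{n,\infty}$ (defined just before Lemma \ref{Ln=LDn}) all coincide with $L_{0,\infty}$.

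Now I combine this with two facts already in the text: the identity $\Less(f)=\inf_n L_{n,\infty}(f)$ established just before Proposition \ref{ess=geoOnV}, and the identity $L_{0,\infty}=L_{D_{0,\infty}}=\|[D,\cdot]\|$ from Lemma \ref{Ln=LDn} (recall that $D_{0,\infty}=D$). Together these give $\Less(f)=\|[D,f]\|$ for every $f\in\ca$, hence the constraint sets $\{f:\|[D,f]\|\le1\}$ and $\{f:\Less(f)\le1\}$ coincide, and therefore $d_D=\dess$ as (possibly infinite) distances on $K$. By Theorem \ref{thm:modLip}, $\dess$ is finite and equals the Euclidean geodesic distance $\dg$, which is the assertion.

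I expect the path-refinement step to be the only point requiring care: one must make sure that ``edge of level $n$ = union of edges of level $n+1$'' really delivers a covering by a \emph{consecutive chain} of level-$(n+1)$ edges whose lengths \emph{add up} to the length of the original edge, so that both the endpoints and the total length of a path are preserved under the replacement. This is precisely the property already used for Lemma \ref{dn<dn+1}(2), so no genuinely new difficulty arises; everything else is formal, the substantive content --- namely the coincidence $\dess=\dg$ --- being already contained in Theorem \ref{thm:modLip}.
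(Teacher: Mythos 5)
Your proof is correct, and it is organized differently from the paper's. The paper works at the level of distances on vertices: it takes the geodesic path realizing $d_{0,\infty}(x,y)$ (Lemma \ref{dninfinity=dn}), observes that under the refinement hypothesis this path has an identical counterpart in $\cp(V_k,E_k)$ with $k=k(x,y)$, so that $d_{0,\infty}(x,y)=d_k(x,y)$, and then chains Lemma \ref{dn<dn+1} and Theorem \ref{dgeo=limdn} to get $d_{0,\infty}=\dg$ on $V$; the passage from vertices to arbitrary points of $K$ must then be repeated ``as in the Theorem above''. You instead work at the level of the Lip-norms: the same length-preserving edge refinement gives $d_{n,\infty}=d_{0,\infty}$ for all $n$, hence $L_{n,\infty}=L_{0,\infty}$, hence $\Less=\inf_n L_{n,\infty}=L_{0,\infty}=\|[D,\cdot]\|$ by Lemma \ref{Ln=LDn}. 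This identifies the two constraint sets in \eqref{NCdistance} and \eqref{essentialdistance} outright, so $d_D=\dess$ on all of $K$ with no separate vertex-to-point step, and Theorem \ref{thm:modLip} finishes. Both arguments rest on the same ingredients (the refinement property exploited in Lemma \ref{dn<dn+1}(2) and the identification of Lemma \ref{Ln=LDn}); yours buys a cleaner reduction, because equality of the seminorms is stronger than equality of the induced distances on $V$ and makes the extension to general points automatic, while the paper's version stays closer to explicit geodesic paths. The one point you flag --- that ``union of edges of level $n+1$'' must deliver a consecutive chain whose lengths sum to $\ell(e)$ --- is exactly the reading the paper itself uses in proving Lemma \ref{dn<dn+1}(2), so no gap arises there.
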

\begin{proof}
Let $x,y\in V$. Then, by Lemma \ref{dninfinity=dn}, $d_{0,\infty}(x,y)$ is realized by a geodesic path in $\cp(V,E_{0,\infty})$, which, as in the proof of the Lemma, can be taken in $\cp(V,E_{0,k})$, with $k=k(x,y)$. Since any edge of level $n$ is the union of edges of level $n+1$, such path has an identical counterpart (as a set) in $\cp(V,E_k)$, namely $d_{0,\infty}(x,y)=d_k(x,y)$.
Making use of Lemma \ref{dn<dn+1} and of Theorem \ref{dgeo=limdn}, we get $d_{0,\infty}(x,y)=\dg(x,y)$, and finally, by Lemma \ref{Ln=LDn}, $d_D(x,y)=\dg(x,y)$. The step from vertices to general points can be proved as in the Theorem above.
\end{proof}

\begin{rem}
As mentioned at the beginning, our spectral triples are based here on the complete graph with vertices $V_0$. Therefore the hypothesis of the Corollary is satisfied e.g. for the generalized Sierpinski triangles in the plane obtained by contractions of $1/p$, or by the higher-dimensional gaskets inscribed in $n$-simplices. However it is not satisfied for the poly-gaskets ($N>3$) in \cite{BCFRT}, nor for the Lindstr\o m or Vicsek snowflakes. The von Koch curve is not even finitely arcwise connected.
\end{rem}

%


\end{document}